\newtheorem{prelemmaa}{{\bf LEMMA}}
\newtheorem{prelem}{{\bf THEOREM}}
\newtheorem{preque}{{\bf QUESTION}}
\newtheorem{theorem}{THEOREM}
\newtheorem{prelemma}{LEMMA}
\newenvironment{lemma}{\begin{prelemma}{\hspace{-0.5
               em}}}{\end{prelemma}}
\newtheorem{preproof}{{\bf PROOF.}}
\newenvironment{proof}[1]{\begin{preproof}{\rm
               #1}\hfill{\rule[-0.5mm]{2mm}{2mm}}}{\end{preproof}}
\newtheorem{preproposition}{{PROPOSITION}}
\newenvironment{proposition}{\begin{preproposition}{\hspace{-0.5
                em}}}{\end{preproposition}}
\newtheorem{preremark}{REMARK}
\newtheorem{precorollary}{{COROLLARY}}
\newtheorem{predefinition}{DEFINITION}
\newtheorem{preexample}{EXAMPLE}
\newtheorem{preconjecture}{{CONJECTURE}}
\newtheorem{pretheo}{{\bf THEOREM}}
\def\newpic#1{}
\date{}
\begin{document}
\title{\bf Super-simple $2$-$(v,5,1)$ directed designs and their smallest defining sets}
\author{
{\sc Nasrin Soltankhah\footnote{Corresponding author. E-mail
address: soltan@alzahra.ac.ir, soltankhah.n@gmail.com}\  AND Farzane
Amirzade  }
 \\[5mm]
Department of Mathematics\\ Alzahra University  \\
Vanak Square 19834 \ Tehran, I.R. Iran }
%
\maketitle
\begin{abstract}
In this paper we investigate the spectrum of super-simple $2$-$(v,5,1)$ directed designs (or simply super-simple $2$-$(v,5,1)$DDs) and also the size of their smallest defining sets.

We show that for all  $v\equiv1,5\ ({\rm mod}\ 10)$ except $v=5,15$ there exists
a super-simple $(v,5,1)DD$. Also for these parameters, except possibly $v=11,91$, there exists a super-simple $2$-$(v,5,1)$DD whose smallest defining sets have at least a half of the blocks.
 \end{abstract}
%
\hspace*{-2.7mm} {\bf KEYWORDS:} {  \sf super simple directed designs, smallest
defining set }
%
\section{Introduction} 
\setcounter{preproposition}{0} \setcounter{precorollary}{0}
\setcounter{prelemma}{0} \setcounter{preexample}{0}
Let $0<t\leq k\leq v$ and $\lambda>0$ be integers. A $t$-$(v,k,\lambda)$ directed design (or simply a
$t$-$(v,k,\lambda)$DD) is a pair $(V,\mathcal{B})$, where $V$ is
a set of $v$ elements, called points, and $\mathcal{B}$ is a collection of  ordered $\it
k-$tuples of distinct elements of $V$, called blocks, with the property that
every ordered $t$-tuple of distinct elements of $V$ occurs in
exactly $\lambda$ blocks (as a subsequence). We say that a $t$-tuple appears in a
$\it k$-tuple if its components appear in that $k$-tuple as a set,
and they appear with the same order. For example the following
base blocks form a $2$-$(21,5,1)$DD$, \mathcal{D}$.
$$
\begin{array}{l}
(0,\ 1,\ 4,\ 14,\ 16),\ (1,\ 0,\ 18,\ 8,\ 6)\\
\end{array}
$$
Here, for example, the 5-tuple $(0,\ 1,\ 4,\ 14,\ 16)$ contains the ordered pairs (0,1),
(0,4), (0,14), (0,16), (1,4), (1,14), (1,16), (4,14), (4,16), (14,16).

In this paper, we extensively use the concept of ``trade'' defined as follows.

 A $(v,k,t)$ directed trade of volume $s$ consists of two
disjoint collections
 $T_1$ and $T_2$, each of
$s$ blocks, such that every $t$-tuple of distinct elements of $V$ is
covered by precisely the same number of blocks of $T_1$ as of $T_2$.
Such a directed trade is usually denoted by $T=T_1-T_2$. Blocks in
$T_1(T_2)$ are called the positive (respectively, negative) blocks
of T. If ${\mathcal{D}}=(V,\mathcal{B})$ is a directed design, and
if $T_1\subseteq \mathcal{B}$, we say that $\mathcal{D}$ contains
the directed trade T. For example the $2$-$(21,5,1)$DD$, \mathcal{D}$
above contains the following directed trade:
$$
\begin{array}{ccccccc}
\frac{\ \sc T_1\ }{\ }              &      & \frac{\ \sc T_2\ }{\ }  \\
0\ 1\ 4\ 14\ 16\                    &      & 1\ 0\ 4\ 14\ 16\\
1\ 0\ 18\ 8\ 6\                     &      & 0\ 1\ 18\ 8\ 6\
\end{array}
$$
A set of blocks which is a subset of a unique $t$-$(v,k,\lambda)$
directed design $\mathcal{D}$, is called a defining set of
$\mathcal{D}$. In other words for a given a $t$-$(v,k,\lambda)$
directed design $\mathcal{D}$, a subset of the blocks of
$\mathcal{D}$ that occurs in no other $t$-$(v,k,\lambda)$ directed
design is called a defining set of $\mathcal{D}$.

Defining sets for directed designs are strongly related to trades. This relation
is illustrated by the following result.
\begin{proposition}~{\rm \cite{MMS}} Let $\mathcal{D}=(V,\mathcal{B})$ be a $t$-$(v,k,\lambda)$ directed design and
let $S\subseteq
\mathcal{B}$, then $S$ is a defining set of $\mathcal{D}$ if only if
$S$ contains a block of every $(v,k,t)$ directed trade $T=T_1-T_2$
such that $T$ is contained in $\mathcal{D}$.
\end{proposition}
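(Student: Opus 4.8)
The plan is to prove both implications of the biconditional by exploiting a single structural fact: if $T=T_1-T_2$ is a directed trade contained in $\mathcal{D}$, then replacing the positive blocks by the negative blocks yields another design. Concretely, set $\mathcal{D}'=(V,\mathcal{B}')$ with $\mathcal{B}'=(\mathcal{B}\setminus T_1)\cup T_2$. Because $T_1$ and $T_2$ cover every ordered $t$-tuple the same number of times, each $t$-tuple is still covered exactly $\lambda$ times in $\mathcal{B}'$, so $\mathcal{D}'$ is again a $t$-$(v,k,\lambda)$DD; and since $T_1,T_2$ are disjoint and nonempty (positive volume), $\mathcal{D}'\neq\mathcal{D}$. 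This ``swapping'' operation is the engine for both directions.

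For the forward implication I would argue by contrapositive. Suppose $S$ fails to contain a block of some trade $T=T_1-T_2$ contained in $\mathcal{D}$, i.e.\ $S\cap T_1=\emptyset$. Then $S\subseteq\mathcal{B}\setminus T_1\subseteq\mathcal{B}'$, so the same set $S$ sits inside the distinct design $\mathcal{D}'$ produced by the swap above. Hence $S$ lies in more than one $t$-$(v,k,\lambda)$DD and is not a defining set, which is exactly the contrapositive of the claim that a defining set must meet every contained trade.

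For the reverse implication I would assume $S$ meets the positive part of every contained trade and suppose, for contradiction, that $S$ also lies in some other design $\mathcal{D}''=(V,\mathcal{B}'')$ with $\mathcal{D}''\neq\mathcal{D}$. The key construction is to recover a trade from this pair: put $T_1=\mathcal{B}\setminus\mathcal{B}''$ and $T_2=\mathcal{B}''\setminus\mathcal{B}$. These are disjoint by definition, and writing $c_\tau$ for the number of common blocks covering a given $t$-tuple $\tau$, both $T_1$ and $T_2$ cover $\tau$ exactly $\lambda-c_\tau$ times; so $T=T_1-T_2$ is a genuine $(v,k,t)$ directed trade with $T_1\subseteq\mathcal{B}$, i.e.\ contained in $\mathcal{D}$. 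Since $\mathcal{D}''\neq\mathcal{D}$ the trade is nonempty, so by hypothesis $S$ contains some block $b\in T_1=\mathcal{B}\setminus\mathcal{B}''$; but $b\in S\subseteq\mathcal{B}''$ is then a contradiction. Therefore $\mathcal{D}$ is the unique design containing $S$, and $S$ is a defining set.

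The step I expect to require the most care is verifying that the symmetric-difference pair $(T_1,T_2)$ in the reverse direction really is a trade in the exact sense of the definition — in particular that $|T_1|=|T_2|$, which follows from $|\mathcal{B}|=|\mathcal{B}''|$ (both designs have the block count forced by the parameters), and the bookkeeping on covering multiplicities when blocks may repeat. Once that equality-of-coverage computation is pinned down, both directions close immediately, and the forward direction's use of Proposition~3.1's swap construction is entirely routine.
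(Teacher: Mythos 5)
Your proof is correct: the block-swapping construction (replace $T_1$ by $T_2$ to get a second design containing $S$) settles necessity, and the symmetric-difference trade $T_1=\mathcal{B}\setminus\mathcal{B}''$, $T_2=\mathcal{B}''\setminus\mathcal{B}$ — with $|T_1|=|T_2|$ guaranteed because the parameters force both designs to have the same number of blocks — settles sufficiency, including the coverage bookkeeping $\lambda-c_\tau$ on each side. The paper itself gives no proof of this proposition, quoting it from \cite{MMS}, and your argument is essentially the standard one from that reference, so there is nothing further to compare.
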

Each defining set of a $t$-$(v,k,\lambda)$DD$, \mathcal{D}$ contains at
least one block in every trade in $\mathcal{D}$. In particular, if
$\mathcal{D}$ contains $m$ mutually disjoint directed trades then
the smallest defining set of $\mathcal{D}$ must contain at least $m$
blocks.

The concept of directed trades and defining sets for directed
designs were investigated in articles $\cite{MMS, trade}$.

To construct designs we use a special type of directed trade, called a
cyclical trade, defined as follows.

Let $T=T_1-T_2$ be a $(v,5,2)$ directed trade of volume $s$, where
$T_1$ contains blocks $b_1,\dots,b_s$ such that each pair of
consecutive 5-tuples (blocks) of $T_1$, $b_i,b_{i+1}$ $i=1,\dots,s\
({\rm mod}\ s)$ is a trade of volume 2. Therefore if a directed
design $\mathcal{D}$ contains $T_1$, then any defining set for
$\mathcal{D}$ must contain at least $[\frac{s+1}{2}]$ blocks of
$T_1$.

A $2$-$(v,k,\lambda)$ directed design (or simply a
$2$-$(v,k,\lambda)$DD) is called simple if its underlying
$2$-$(v,k,2\lambda)$-BIBD contains no repeated blocks. A
$2$-$(v,k,\lambda)$DD   is super-simple if its underlying
$2$-$(v,k,2\lambda)$-BIBD is super-simple, that is, any two blocks
of the BIBD intersect in at most two points.

The concept of super-simple designs was introduced by Mullin and Gronau
$\cite{mullin}$. There are known results for the existence of
super-simple designs, especially for the existence of super-simple
$(v,k,\lambda)$-BIBDs. When $k=5$  the necessary conditions for
super-simple $(v,k,\lambda)$-BIBDs are known to be sufficient for
$2\leq \lambda \leq 5$ with few possible exceptions. These known
results can be found in articles $\cite{pentagon, Chen and Wei2007,
Chen and Wei2006, pentagon2004}$.

When $k=3$, a super-simple design is just a simple design. In
$\cite{Quinn}$ Grannell, Griggs and Quinn have shown that for each
admissible value of $v$, there exists a simple $2$-$(v,3,1)$DD whose
smallest defining sets have at least a half of the blocks. In
$\cite{farzane}$ Amirzade and Soltankhah have proved
 that for all $v\equiv1\ ({\rm mod}\ 3)$ there exists a super-simple $2$-$(v,4,1)$DD whose smallest defining sets have
  at least a half of blocks. The necessary condition for the existence of a  super-simple $2$-$(v,5,1)$ directed design
  is that $v\equiv1,5\ ({\rm mod}\ 10)$ and $v\neq5,15$.

 In this paper we show that this necessary condition is
  sufficient. Also we show that for these parameters, except possibly $v=11,91$, there exists a
  super-simple $2$-$(v,5,1)$DD whose smallest defining sets have at least a half of the blocks.

In other words we are interested in the quantity
\begin{center}
$\large f=\frac{{\rm number}\ {\rm of}\ 5-{\rm tuples}\ {\rm in}\
{\rm a}\ {\rm smallest}\ {\rm defining}\ {\rm set}\ {\rm in}\
\mathcal{D}} {{\rm number}\ {\rm of}\ 5-{\rm tuples}\ {\rm in}\
\mathcal{D}}.$
\end{center}
We show that, there exists a super-simple $2$-$(v,5,1)$DD, $\mathcal{D}$
with $f\geq\frac{1}{2}$.

The proofs in this paper use various types of combinatorial objects.
The definitions of these objects are either given in this section or
can be found in the related references.

A pairwise balanced design of order $v$ with block sizes $k\in K$ or PBD$(v,K,\lambda)$ is a pair $(V,\mathcal{B})$, where $V$ is a
$v$-set, and $\mathcal{B}$ is a collection of subsets (called blocks) of $V$ such that if $B\in\mathcal{B}$ then $|B|\in K$ and every pair of distinct elements of $V$ appears in
precisely $\lambda$ blocks. If $\lambda=1$, PBD$(v,K,1)$ is denoted PBD$(v,K)$.

A group divisible design of order $v$ with block sizes $k\in K$ or $(K,\lambda)$-GDD of type ${g_1}^{u_1}{g_2}^{u_2}\dots {g_N}^{u_N}$, where $u_1,u_2,\dots,u_N$ are non-negative integers, is a triple $(V,\mathcal{G},\mathcal{B})$, where $V$ is a
$v$-set that is partitioned into parts (called groups) of sizes $g_1,g_2,\dots, g_N$, and $\mathcal{B}$ is a collection of subsets (called blocks) of $V$ such that if $B\in\mathcal{B}$ then $|B|\in K$ and every pair of distinct elements of $V$ appears in
precisely $\lambda$ blocks or one group but not in both. If $\lambda=1$, $(K,1)$-GDD is denoted $K$-GDD.

We can
delete one point from a PBD$(v,K)$ to form a $K$-GDD of order $v-1$ of type ${g_1}^{u_1}{g_2}^{u_2}\dots {g_N}^{u_N}$, where $u_1,u_2\dots u_N$ are non-negative
integers and for all $i=1,2,\dots, N$, $g_i=k_i-1;\ k_i\in K$.

A transversal design TD$_{\lambda} (k,n)$ is a $(K,\lambda)$-GDD of type $n^k$. If $\lambda=1$, TD$_{1} (k,n)$ is denoted TD$(k,n)$.

A directed group divisible design  $(K,\lambda)$-DGDD is a group divisible design $GDD$ in which every block is ordered and each ordered pair formed from distinct elements of different groups occurs in exactly $\lambda$ blocks. A $(K,\lambda)$-DGDD of type $({g_1}^{u_1}{g_2}^{u_2}\dots{g_N}^{u_N})$ is super-simple if its underlying  $(K,2\lambda)$-GDD of type $({g_1}^{u_1}{g_2}^{u_2}\dots {g_N}^{u_N})$ is super-simple.

%

\section{Super-simple directed group divisible designs with block size 5 and index 1}  

The constructions used in this paper will combine both direct and recursive methods. For our direct constructions,
we shall adopt the standard approach of using finite abelian groups to generate the set of blocks for any given super-simple DGDDs or super-simple $2$-$(v,5,1)$DDs. That is, instead of listing all of the blocks, we give the element set $V$, plus a set of base blocks, and generate the other blocks by an additive group $G$. For $g=|G|$, the notation $(+t\ {\rm mod}\ g)$ means the base blocks should be developed by adding $0,\ t,\ 2t,\ ,\dots, g-t\ ({\rm mod}\ g)$ to them.

In this section we construct some super-simple DGDDs with $f\geq\frac{1}{2}$ that we will use in our main result.
These $(k,\lambda)$-DGDDs are super-simple because their underlying $(k,2\lambda)$-GDDs are super-simple,
(see $\cite{pentagon, HSPS, pentagon2004}$).

Here we will construct super-simple DGDDs with block size 5 and
index 1 of type $(g^u)$ for $(g,u)=(5,5),\ (5,7),\ (5,9),\ (6,5),\
(6,6), (10,5),\ (4,5),\ (4,6),\ (4,10),\ (4,11),\ (4,16),\ (2,6),\
(10,6)$ and super-simple DGDDs of type $(g^um^1)$ for
$(g,u,m)=(6,5,8),\ (4,8,6),\ (1,20,3),\ (1,16,5)$.

Now we can construct a super-simple DGDD of type $(5^5)$ on the set
$V=\{0,1,2,3,4\} \times Z_5$ with groups $\{(i , 0),\ (i , 1),\ (i ,
2),\ (i , 3),\ (i , 4)\};\ i=0,1,2,3,4$ by developing the second
coordinate of the following 10 base blocks modulo 5 or simply by $(-
, +1\ {\rm mod}\ 5)$.

$$
\begin{array}{l}
((0 , 0),\ (1 , 1),\ (2 , 4),\ (3 , 4),\ (4 , 1)),\ \ ((4 , 0),\ (3 , 1),\ (2 , 3),\ (1 , 1),\ (0 , 0))\\
((0 , 1),\ (1 , 0),\ (2 , 1),\ (3 , 4),\ (4 , 4)),\ \ ((4 , 3),\ (3 ,1),\ (2 , 0),\ (1 , 0),\ (0 , 1))\\
((0 , 4),\ (1 , 1),\ (2 , 0),\ (3 , 1),\ (4 , 4)),\ \ ((4 , 3),\ (3 , 3),\ (2 , 4),\ (1 , 1),\ (0 , 4))\\
((0 , 4),\ (1 , 4),\ (2 , 1),\ (3 , 0),\ (4 , 1)),\ \ ((4 , 0),\ (3 , 2),\ (2 , 0),\ (1 , 4),\ (0 , 4))\\
((0 , 1),\ (1 , 4),\ (2 , 4),\ (3 , 1),\ (4 , 0)),\ \ ((4 , 4),\ (3,
3),\ (2 , 3),\ (1 , 4),\ (0 , 1)).
\end{array}
$$

This super-simple DGDD has 25 disjoint directed trades of volume 2
so has $f\geq\frac{1}{2}$.

A super-simple DGDD of type $(5^7)$  can be constructed on the set $Z_{35}$ with groups
$\{i,i+7,i+14,i+21,i+28\};\ i=0,1,\dots,6$ by developing
the following base blocks with the automorphism $X\mapsto X+7$ or simply by $(+7\ {\rm mod}\ 35)$.

$$
\begin{array}{lllll}
(21, 17, 5, 6, 15),& (5, 17, 4, 7, 34)& & & \\
(15, 33, 20, 23, 21),& (2, 20, 33, 17, 28)&&& \\
(5, 16, 25, 21, 8),& (21, 25, 9, 10, 19)&&&\\
(30, 33, 15, 6, 11),&(31, 8, 25, 33, 30)&&&\\
(19, 11, 8, 24, 28),& (3, 9, 8, 11, 21),& (18, 20, 24, 8, 9)&&\\
(5, 23, 1, 20, 31),& (21, 32, 30, 20, 1),& (17, 20, 30, 12, 18)&&\\
(29, 6, 31, 28, 23),& (9, 13, 4, 28, 31),& (7, 32, 13, 9, 33),& (0, 17, 13, 32, 8),& (8, 32, 31, 6, 12)\\
(28, 29, 5, 9, 3),&&&&\\
(6, 8, 0, 18, 5)&&&&.
\end{array}
$$
This super-simple DGDD has 20  disjoint directed trades of volume 2 in
the first four rows above, the fifth row has five disjoint cyclical trades of
volume 3, the sixth row is a cyclical trade of
volume 15 and the seventh row has five disjoint cyclical trades of
volume 5. So for this super-simple DGDD we have
$f\geq\frac{20+10+8+15}{105}=\frac{53}{105}>\frac{1}{2}$.

A super-simple DGDD of type $(5^9)$ with $f\geq\frac{1}{2}$ can be constructed on the set
$Z_{45}$ with groups $\{i,i+9,i+18,i+27,i+36\};\ i=0,1,\dots,8$ by developing the following base blocks $(+9\ {\rm mod}\ 45)$

$$
\begin{array}{l}
(0,\ 10,\ 11,\ 39,\ 40),\ \ (10,\ 0,\ 2,\ 25,\ 23)\\
(1,\ 11,\ 9,\ 41,\ 43),\ \ \ (11,\ 1,\ 0,\ 24,\ 26)\\
(2,\ 9,\ 10,\ 44,\ 42),\ \ \ (5,\ 39,\ 44,\ 10,\ 16)\\
(3,\ 14,\ 17,\ 40,\ 36),\ \ (21,\ 17,\ 14,\ 27,\ 31)\\
(4,\ 16,\ 15,\ 36,\ 39),\ \ (31,\ 24,\ 25,\ 39,\ 36)\\
(5,\ 17,\ 12,\ 43,\ 37),\ \ (17,\ 5,\ 3,\ 20,\ 24)\\
(6,\ 13,\ 16,\ 38,\ 44),\ \ (24,\ 16,\ 13,\ 35,\ 29)\\
(7,\ 15,\ 13,\ 37,\ 41),\ \ (0,\ 38,\ 37,\ 13\ 12)\\
(8,\ 12,\ 14,\ 42,\ 38),\ \ (7,\ 40,\ 42,\ 14,\ 10)\\
(0,\ 14,\ 15,\ 43,\ 44),\ \ (14,\ 0,\ 6,\ 22,\ 20)\\
(1,\ 17,\ 13,\ 42,\ 39),\ \ (17,\ 1,\ 4,\ 25,\ 18)\\
(2,\ 12,\ 16,\ 40,\ 41),\ \ (12,\ 2,\ 7,\ 24,\ 19)\\
(3,\ 16,\ 11,\ 37,\ 42),\ \ (16,\ 3,\ 2,\ 18,\ 26)\\
(4,\ 10,\ 17,\ 41,\ 38),\ \ (10,\ 4,\ 8,\ 21,\ 24)\\
(5,\ 15,\ 9,\ 38,\ 40),\ \ \ (15,\ 5,\ 0,\ 19,\ 21)\\
(6,\ 9,\ 14,\ 39,\ 37),\ \ \ (9,\ 6,\ 5,\ 26,\ 25)\\
(7,\ 11,\ 12,\ 44,\ 36),\ \ (11,\ 7,\ 3,\ 23,\ 22)\\
(8,\ 13,\ 10,\ 36,\ 43),\ \ (13,\ 8,\ 1,\ 20,\ 23).\\
\end{array}
$$

A super-simple DGDD of type $(6^5)$ with $f\geq\frac{1}{2}$ can be
constructed on the set $V=\{0,1,2,3,4\} \times Z_6$ with groups
$\{(i , 0),\ (i , 1),\ (i , 2),\ (i , 3),\ (i , 4),\ (i , 5)\};\
i=0,1,2,3,4$ by developing the second coordinate of the
following 12 base blocks modulo 6 or simply by $(- , +1\ {\rm mod}\
6)$.
$$
\begin{array}{l}
((4 , 0),\ (0 , 1),\ (1 , 3),\ (3 , 4),\ (2 , 2)),\ \ ((0 , 5),\ (2 , 2),\ (3 , 4),\ (1 , 0),\ (4 , 3))\\
((0 , 3),\ (3 , 4),\ (2 , 5),\ (4 , 2),\ (1 , 1)),\ \ ((2 , 5),\ (3 , 4),\ (1 , 4),\ (0 , 2),\ (4 , 5))\\
((1 , 5),\ (0 , 5),\ (2 , 3),\ (4 , 1),\ (3 , 1)),\ \ ((0 , 5),\ (1 , 5),\ (3 , 5),\ (4 , 5),\ (2 , 5))\\
((4 , 4),\ (2 , 1),\ (1 , 4),\ (0 , 3),\ (3 , 1)),\ \ ((1 , 4),\ (2 , 1),\ (4 , 3),\ (3 , 2),\ (0 , 1))\\
((4 , 3),\ (3 , 5),\ (0 , 5),\ (2 , 4),\ (1 , 4)),\ \ ((3 , 2),\ (4 , 5),\ (1 , 3),\ (2 , 4),\ (0 , 5))\\
((1 , 3),\ (4 , 1),\ (3 , 2),\ (2 , 5),\ (0 , 4)),\ \ ((2 , 2),\ (3 , 5),\ (0 , 0),\ (4 , 1),\ (1 , 3)).\\
\end{array}
$$

A super-simple DGDD of type $(6^6)$ can be constructed on the set
$Z_{36}$ with groups $\{i,i+6,i+12\dots,i+30\};\ i=0,1,\dots,5$
by developing the following base blocks $(+1\ {\rm mod}\ 36)$.
$$(3,\ 1,\ 6,\ 2,\ 16),\ \ (18,\ 1,\ 3,\ 10,\ 26),\ \ (19,\ 10,\ 3,\ 14,\ 0).$$
This super-simple DGDD has  a cyclical trade of volume 108.
So for this super-simple DGDD, we have
$f\geq\frac{54}{108}\geq\frac{1}{2}$.

A super-simple DGDD of type $(10^5)$ can be constructed on the set
$Z_{40}\cup \{\infty_0,\infty_1,\dots,\infty_9\}$ with groups
$\{\infty_0,\infty_1,\dots,\infty_9\}\cup \{i,i+4,i+8\dots,i+36\};\
i=0,1,2,3$ by developing the following base blocks.

$$
\begin{array}{l}
(10,\ 13,\ 28,\ 7,\ \infty_9),\ \ (2,\ 7,\ 28,\ 9,\ \infty_0)\\
(\infty_0,\ 0,\ 5,\ 26,\ 7),\ \ \ \ \ (\infty_9,\ 8,\ 11,\ 26,\ 5)\\
(\infty_1,\ 11,\ 2,\ 1,\ 0),\ \ \ \ \ (2,\ 11,\ 4,\ 33,\ \infty_3) \\
(\infty_3,\ 0,\ 9,\ 2,\ 31),\ \ \ \ \ (9,\ 0,\ 39,\ 38,\ \infty_1) \\
(\infty_4,\ 0,\ 35,\ 14,\ 1),\ \ \ \ (8,\ 31,\ 1,\ 14,\ \infty_5) \\
(\infty_5,\ 10,\ 33,\ 3,\ 16),\ \ (2,\ 37,\ 16,\ 3,\ \infty_4) \\
(\infty_6,\ 35,\ 0,\ 13,\ 30),\ \ (15,\ 13,\ 0,\ 22,\ \infty_8) \\
(\infty_8,\ 17,\ 15,\ 2,\,24),\ \ \ (37,\ 2,\ 15,\ 32,\ \infty_6) \\
(3,\ 6,\ \infty_2,\ 0,\ 17)\\
(39,\ 0,\ \infty_7,\ 10,\ 25).\\
\end{array}
$$
The base blocks in the first eight rows above must be developed by $(+4\ {\rm mod}\ 40)$ and any rows contains 10 disjoint directed trades of volume 2. The last two base blocks must be developed by $(+2\ {\rm mod}\ 40)$ and each of them contains 10 disjoint directed trades of volume 2. So this design has $f\geq\frac{1}{2}$.

A super-simple DGDD of type $(4^5)$ with $f\geq\frac{1}{2}$ can be constructed on the set $Z_{20}$, with groups $\{i,i+1,i+2,i+3\}\ ({\rm mod}\ 20);\ i=1,5,9,13,17  $ and
with the following blocks.

$$
\begin{array}{l}
(1,\ 5,\ 9,\ 13,\ 17)  \ \ \ \ \ (2,\ 6,\ 10,\ 14,\ 17) \ \ \ \ (3,\ 7,\ 11,\ 15,\ 17) \ \ \ \ (4,\ 8,\ 12,\ 16,\ 17) \\
(17,\ 13,\ 12,\ 7,\ 2)  \ \ \ \ (17,\ 14,\ 11,\ 8,\ 1) \ \ \ (17,\ 15,\ 10,\ 5,\ 4) \ \ \ \ \ (17,\ 16,\ 9,\ 6,\ 3)  \\
                                                                         \\
(2,\ 7,\ 9,\ 16,\ 18)  \ \ \ \ \ (3,\ 6,\ 12,\ 13,\ 18) \ \ \ \ (4,\ 5,\ 11,\ 14,\ 18) \ \ \ \ (1,\ 7,\ 12,\ 14,\ 19) \\
(18,\ 16,\ 12,\ 5,\ 1) \ \ \ \ (18,\ 13,\ 9,\ 8,\ 4) \ \ \ \ \ (18,\ 14,\ 10,\ 7,\ 3) \ \ \ \ (19,\ 14,\ 9,\ 5,\ 2)  \\
                                                                         \\
(3,\ 5,\ 10,\ 16,\ 19)  \ \ \ \ (4,\ 6,\ 9,\ 15,\ 19)\ \ \ \ \ (1,\ 6,\ 11,\ 16,\ 0) \ \ \ \ \ (2,\ 5,\ 12,\ 15,\ 0)  \\
(19,\ 16,\ 11\ 7,\ 4)  \ \ \ \ \ (19,\ 15,\ 12,\ 8,\ 3) \ \ \ \ (0,\ 16,\ 10,\ 8,\ 2) \ \ \ \ \ (0,\ 15,\ 9,\ 7,\ 1)   \\
                                                                         \\
(4,\ 7,\ 10,\ 13,\ 0)   \ \ \ \ \ (3,\ 8,\ 9,\ 14,\ 0) \ \ \ \ \ \ \ (2,\ 8,\ 11,\ 13,\ 19)\ \ \ \ (1,\ 8,\ 10,\ 15,\ 18) \\
(0,\ 13,\ 11,\ 5,\ 3)   \ \ \ \ \ (0,\ 14,\ 12,\ 6,\ 4) \ \ \ \ \  (19,\ 13,\ 10,\ 6,\ 1) \ \ \ \ (18,\ 15,\ 11,\ 6,\ 2).\\
\end{array}
$$

A super-simple DGDD of type $(4^6)$ with $f\geq\frac{1}{2}$ can be
constructed on the set $Z_{24}$ with groups $\{i,i+6,i+12,i+18\};\
i=0,1,\dots,5$ by developing the following base blocks $(+1\
{\rm mod}\ 24)$.

$$
\begin{array}{l}
(17,\ 8,\ 0,\ 13,\ 3),\ \ (20,\ 13,\ 0,\ 22,\ 21).\\
\end{array}
$$

A super-simple DGDD of type $(4^{10})$ with $f\geq\frac{1}{2}$ can
be constructed on the set $Z_{36}\cup
\{\infty_0,\infty_1,\infty_2,\\ \infty_3\}$ with groups
$\{\infty_0,\infty_1,\infty_2,\infty_3\}\cup \{i,i+9,i+18,i+27\};\
i=0,1,\dots,8$ by developing the following base blocks $(+2\
{\rm mod}\ 36)$.

$$
\begin{array}{l}
(12,\ 0,\ 7,\ 32,\ 22),\ \ \ \ (7,\ 0,\ \infty_0,\ 11,\ 8)\\
(11,\ 32,\ 1,\ 21,\ 33),\ \ (29,\ 24,\ 23,\ 21,\ 1)\\
(0,\ 29,\ 28,\ 4,\ 6),\ \ \ \ \ (29,\ 0,\ \infty_1,\ 34,\ 17)\\
(13,\ 0,\ \infty_2,\ 30,\ 15)\\
(33,\ 0,\ \infty_3,\ 16,\ 3).\\
\end{array}
$$

A super-simple DGDD of type $(4^{11})$ with $f\geq\frac{1}{2}$ can
be constructed on the set $Z_{44}$ with groups
$\{i,i+11,i+22,i+33\};\ i=0,1,\dots,10$ by developing the
following base blocks $(+1\ {\rm mod}\ 44)$.
$$
\begin{array}{l}
(30,\ 0,\ 36,\ 21,\ 26),\ \ (35,\ 36,\ 0,\ 38,\ 10) \\
(0,\ 24,\ 39,\ 12,\ 37),\ \ (1,\ 24,\ 0,\ 31,\ 28).\\
\end{array}
$$

A super-simple DGDD of type $(4^{16})$ with $f\geq\frac{1}{2}$ can
be constructed on the set $Z_{64}$ with groups
$\{i,i+16,i+32,i+48\};\ i=0,1,\dots,15$ by developing the
following base blocks $(+1\ {\rm mod}\ 64)$.
$$
\begin{array}{l}
(0,\ 1,\ 40,\ 3,\ 47),\ \ \ \ (1,\ 0,\ 25,\ 62,\ 18) \\
(0,\ 4,\ 26,\ 14,\ 35),\ \ (4,\ 0,\ 42,\ 54,\ 33)\\
(0,\ 5,\ 41,\ 11,\ 56),\ \ (5,\ 0,\ 28,\ 58,\ 13).\\
\end{array}
$$

A super-simple DGDD of type $(2^6)$ with $f\geq\frac{1}{2}$ can be constructed on the set $Z_{12}$, with groups $\{i,i+1\}\ ({\rm mod}\ 12);\ i=1,3,5,7,9,11$ and
with the following blocks.
$$
\begin{array}{l}
(3,\ 8,\ 6,\ 9,\ 2),\ \ \ (8,\ 3,\ 11,\ 1,\ 10) \\
(4,\ 7,\ 5,\ 10,\ 1),\ \ (7,\ 4,\ 0,\ 2,\ 9)\\
(9,\ 1,\ 4,\ 6,\ 11),\ \ (10,\ 2,\ 7,\ 11,\ 6)\\
(2,\ 10,\ 3,\ 5,\ 0),\ \ (1,\ 9,\ 8,\ 0,\ 5)\\
(5,\ 11,\ 9,\ 3,\ 7),\ \ (11,\ 5,\ 2,\ 4,\ 8)\\
(6,\ 0,\ 10,\ 8,\ 4),\ \ (0,\ 6,\ 1,\ 7,\ 3).\\
\end{array}
$$

For super-simple DGDD of type $(10^6)$ let $(G,\mathcal{B})$ be a super-simple DGDD of type $(2^6)$ with $f\geq\frac{1}{2}$ with
element set U. We form a super-simple DGDD of type $(10^6)$ with $f\geq\frac{1}{2}$ on the element set $U\times
Z_{5}$. For each $b\in \mathcal{B}$, say $\{x_1,x_2,\dots,x_5\}$, we form a $TD(5,5)$  on
$b\times Z_5$, such that its groups are $\{x_1\}\times Z_5$,
$\{x_2\}\times Z_5$, $\dots$,  $\{x_5\}\times Z_5$.

A super-simple DGDD of type $(6^5\ 8^1)$ can be constructed on the
set $Z_{30}\cup\{\infty_0,\infty_1,\dots,\infty_7\}$ with groups
$\{\infty_0,\infty_1,\dots,\infty_7\}\cup
\{i,i+5,i+10,i+15,i+20,i+25\};\ i=0,1,2,3,4$ by developing the
following base blocks $(+2\ {\rm mod}\ 30)$. $\infty_0$ must be
replaced with $\infty_j ;\ \ j=1,2$ when adding value $6i+2j;\ \
i=0,1,2,3,4$

$$
\begin{array}{l}
(6,\ 14,\ \infty_0,\ 2,\ 0),\ \ \ \ \ (1,\ 23,\ \infty_0,\ 5,\ 7),\ \ (4,\ 3,\ \infty_0,\ 27,\ 16)\\
(7,\ 0,\ \infty_3,\ 14,\ 3),\ \ \ \ \ (1,\ 0,\ \infty_4,\ 2,\ 9),\ \ \ (3,\ 24,\ \infty_5,\ 0,\ 21)\\
(26,\ 27,\ \infty_6,\ 13,\ 0),\ \ (0,\ 13,\ \infty_7\ 11,\ 22).
\end{array}
$$

\noindent This super-simple directed group divisible design has nine disjoint cyclical trades of volume 5 in the first row above and
each of other base blocks is a cyclical trade of volume 15. So for this super-simple DGDD, we have
$f\geq\frac{9*3+5*8}{120}=\frac{67}{120}>\frac{1}{2}$.

A super-simple DGDD of type $(4^8\ 6^1)$ with $f\geq\frac{1}{2}$ can
be constructed on the set $Z_{32}\cup\{\infty_0,\infty_1,\dots,\infty_5\}$ with groups
$\{\infty_0,\infty_1,\dots,\infty_5\}\cup\{i,i+8,i+16,i+24\};\
i=0,1,\dots,7$ by developing the following base blocks $(+2\
{\rm mod}\ 32)$.

$$
\begin{array}{l}
(28,\ 7,\ 0,\ 2,\ 22),\ \ \ \ (25,\ 4,\ \infty_0,\ 0,\ 7)\\
(11,\ 15,\ 0,\ 13,\ 1),\ \ \ (18,\ 3,\ \infty_1,\ 0,\ 15)\\
(29,\ 2,\ \infty_2,\ 0,\ 23)\\
(0,\ 25,\ \infty_3,\ 31,\ 12)\\
(0,\ 9,\ \infty_4,\ 19,\ 18)\\
(31,\ 22,\ \infty_5,\ 0,\ 27).\\
\end{array}
$$

A super-simple DGDD of type $(1^{20}\ 3 ^1)$ with $f\geq\frac{1}{2}$
can be constructed on the set $Z_{20}\cup
\{\infty_0,\infty_1,\infty_2\}$ with groups
$\{\infty_0,\infty_1,\infty_2\}\cup\{i\};\ i=0,1,\dots,19$ by
developing the following base blocks $(+4\ {\rm mod}\ 20)$.

$$
\begin{array}{l}
(2,\ 14,\ 0,\ 18,\ 3),\ \ \ \ \ (19,\ \infty_0,\ 0,\ 14,\ 7),\ (17,\ 7,\ 14,\ 16,\ \infty_2)\\
(2,\ \infty_1,\ 19,\ 1,\ 10),\ \ (4,\ 1,\ 19,\ 3,\ 15)\\
(10,\ 0,\ 5,\ \infty_0,\ 13),\ \ (4,\ 16,\ 5,\ 0,\ 6)\\
(\infty_2,\ 15,\ 14,\ 1,\ 8),\ \ (11,\ 9,\ 1,\ 14,\ 5)\\
(9,\ 15,\ 12,\ \infty_1,\ 0).\\
\end{array}
$$

A super-simple DGDD of type $(1^{16}\ 5 ^1)$ with $f\geq\frac{1}{2}$
can be constructed on the set $Z_{21}$, with groups
$\{16,17,18,19,20\}\cup\{i\};\ i=0,1,\dots,15$ and with the
following blocks.
$$
\begin{array}{l}
(16, 1, 0, 6, 7) \ \ \ \ \ (16, 3, 2, 4, 5) \ \ \ \ \ \ (16, 15, 14, 9, 8) \ \ \ \ (16, 13, 12, 10, 11) \ \ (17, 5, 0, 4, 1)   \\
(19, 15, 10, 6, 0) \ \ (19, 13, 8, 4, 2) \ \ \ \ (17, 11, 10, 14, 15) \ \ (18, 1, 4, 10, 12) \ \ \ \ (18, 0, 5, 11, 13) \\
                                                                                                      \\
(10, 9, 7, 4, 17)\ \ \ (17, 8, 9, 12, 13) \ \ \ (18, 3, 6, 8, 14)\ \ \ \ \ \ (0, 3, 12, 15, 16) \ \ \ (0, 2, 8, 10, 18)  \\
(18, 2, 7, 9, 15) \ \ \ (19, 12, 9, 5, 3) \ \ \ \ (17, 7, 2, 6, 3) \ \ \ \ \ \ \ \ (14, 13, 3, 0, 17) \ \ \ (11, 9, 2, 0, 19)  \\
                                                                                                       \\
(1, 2, 13, 14, 16) \ \ (5, 7, 12, 14, 18) \ \ (4, 7, 11, 8, 16) \ \ \ \ \ \ \ (1, 3, 9, 11, 18) \ \ \ \ (4, 6, 13, 15, 18) \\
(15, 12, 2, 1, 17) \ \ (15, 13, 7, 5, 19) \ \ (19, 14, 11, 7, 1) \ \ \ \ \ (10, 8, 3, 1, 19) \ \ \ \ (14, 12, 6, 4, 19) \\
                                                                                                      \\
(5, 6, 9, 10, 16) \ \ \ (20, 12, 8, 7, 0) \  \ \ (20, 15, 11, 4, 3) \ \ \ \ \ \ (20, 13, 9, 6, 1) \ \ \ \ (20, 14, 10, 5, 2) \\
(8, 11, 6, 5, 17) \ \ \ (6, 2, 11, 12, 20) \ \ (1, 5, 8, 15, 20) \ \ \ \ \ \ \ (3, 7, 10, 13, 20) \ \ (4, 0, 9, 14, 20).  \\
\end{array}
$$

%

\section{Super-simple directed designs with block size 5 and index 1}  

A necessary and sufficient condition for the existence of a
$2$-$(v,5,1)$DD is $v\equiv1,5\ ({\rm mod}\ 10)$, except $v=15$, (see $\cite{DBIBD}$).

In this section  simultaneously we show that the necessary and sufficient condition for the existence of a super-simple $2$-$(v,5,1)$DD is  $v\equiv1,5\ ({\rm mod}\ 10)$ except $v=5,15$ and for these parameters, except possibly $v=11,91$, there exists
a super-simple $(v,5,1)$DD with $f\geq\frac{1}{2}$.

Our principle tool is to apply Wilson's Fundamental construction. For example we have the following
Lemmas.
\begin{lemma}
If there are a $\{K\}$-GDD of type ${g_1}^{u_1}{g_2}^{u_2}\dots {g_N}^{u_N}$, super-simple $2$-$(\alpha g_i+1,5,1)$DD for each
$i,\ i=1,2,\dots,N$ and super-simple DGDDs of type $(\alpha ^k)$ for each $k\in K$ then there exists a super-simple $2$-$(\alpha\sum_{i=1}^{N}g_iu_i+1,5,1)$DD.
\end{lemma}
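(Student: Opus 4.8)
The plan is to apply Wilson's Fundamental Construction in its directed, super-simple form. Write $(W,\mathcal{G},\mathcal{A})$ for the given $K$-GDD of type $g_1^{u_1}\cdots g_N^{u_N}$, so that $|W|=n:=\sum_{i=1}^{N}g_iu_i$ and, as in every GDD, each block of $\mathcal{A}$ is a partial transversal meeting every group of $\mathcal{G}$ in at most one point. First I would inflate every point by a factor $\alpha$: take the point set of the target design to be $V=(W\times Z_\alpha)\cup\{\infty\}$, which has exactly $\alpha n+1$ points, matching the required order $\alpha\sum_{i=1}^{N}g_iu_i+1$. For $x\in W$ I call $\{x\}\times Z_\alpha$ the fibre over $x$.

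Next I would place blocks of two kinds. For each block $B\in\mathcal{A}$ with $|B|=k$, put on $B\times Z_\alpha$ a copy of a super-simple DGDD of type $(\alpha^k)$ whose groups are exactly the fibres $\{x\}\times Z_\alpha$, $x\in B$; such a design exists by hypothesis since $k\in K$. For each group $G_i\in\mathcal{G}$ of size $g_i$, put on $(G_i\times Z_\alpha)\cup\{\infty\}$ a copy of a super-simple $2$-$(\alpha g_i+1,5,1)$DD, which exists by hypothesis. All blocks produced have size $5$, so $V$ together with this block collection is a candidate $2$-$(\alpha n+1,5,1)$DD.

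Then I would verify the directed design property, namely that every ordered pair of distinct points of $V$ is covered exactly once, by splitting into cases according to the images of the two points in $W\cup\{\infty\}$. A pair lying in a single fibre, or in two fibres over distinct points of the \emph{same} group, is covered once by the unique group-design on that group and by no block-design, since GDD blocks never contain two points of one group. A pair in fibres over points $x\neq y$ of different groups is covered exactly once, by the block-design sitting on the unique block $B\in\mathcal{A}$ through $\{x,y\}$, and by nothing else. Finally any ordered pair involving $\infty$ together with a point over $x\in G_i$ is covered exactly once, by the group-design on $(G_i\times Z_\alpha)\cup\{\infty\}$.

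The decisive step, and the one I expect to be the real obstacle, is super-simplicity: I must show that any two distinct blocks of the underlying $2$-$(\alpha n+1,5,2)$-BIBD meet in at most two points. Two blocks from the same component are fine by the super-simplicity hypotheses. For two block-designs on $B_1\times Z_\alpha$ and $B_2\times Z_\alpha$ with $B_1\neq B_2$, distinct GDD blocks meet in at most one point, so $B_1\cap B_2$ is a single point $z$ (or empty); since each DGDD block meets the fibre $\{z\}\times Z_\alpha$ in at most one point, the intersection has size at most one. The same fibre-by-fibre bound controls a block-design against a group-design, whose overlap lies over $B\cap G_i$, again at most a single point, and two distinct group-designs can only share $\infty$. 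Hence every cross-component intersection has size at most one, so the construction is super-simple. The heart of the argument is thus the transversality of GDD and DGDD blocks, which keeps all cross-component overlaps confined to a single fibre.
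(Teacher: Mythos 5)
Your construction is exactly the one the paper gives for this lemma: inflate the points of the $K$-GDD by a factor $\alpha$, place a super-simple DGDD of type $(\alpha^k)$ on each inflated block $b\times Z_\alpha$ with the fibres as groups, and fill each inflated group together with a single new point $\infty$ with a super-simple $2$-$(\alpha g_i+1,5,1)$DD. The only difference is that you spell out the ordered-pair coverage and the cross-component super-simplicity checks, which the paper leaves implicit, and those checks are correct.
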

\begin{proof}
{Let $(G,\mathcal{B})$ be a group divisible design of type ${g_1}^{u_1}{g_2}^{u_2}\dots {g_N}^{u_N}$ with
element set U, blocks of size $k\in K$. We form a super-simple $2$-$(\alpha\sum_{i=1}^{N}g_iu_i+1,5,1)$DD on the element set $U\times
Z_{\alpha} \bigcup\{\infty\}$. For each $b\in \mathcal{B}$ of size $
k$, say $\{x_1,x_2,\dots,x_k\}$, we form a super-simple $DGDD$ of type $(\alpha ^k)$  on
$b\times Z_{\alpha}$, such that its groups are $\{x_1\}\times Z_{\alpha}$,
$\{x_2\}\times Z_{\alpha}$, $\dots$,  $\{x_k\}\times Z_{\alpha}$. For
each group $g\in G$ of size $g_i,\ i=1,2,\dots,N$ we substitute a super-simple
$2$-$(\alpha g_i+1,5,1)$DD.
}
\end{proof}
In particular, if there is a constant c such that each of the super-simple DGDDs of type $(\alpha^k)$ and each of the super-simple
$2$-$(\alpha g_i+1,5,1)$DDs has $f\geq c$, then the resulting super-simple $2$-$(v,5,1)$DD in Lemma 3.1 also has $f\geq c$.

\begin{lemma}
If there are a super-simple DGDD of type $({g_1}^{u_1}{g_2}^{u_2}\dots {g_N}^{u_N})$ with block size 5 and index 1, super-simple $2$-$(\alpha g_i+1,5,1)$DDs for each
$i,\ i=1,2,\dots,N$ and $5$-GDDs of type $\alpha ^5$ then there exists a super-simple $2$-$(\alpha\sum_{i=1}^{N}g_iu_i+1,5,1)$DD.
\end{lemma}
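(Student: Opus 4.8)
The plan is to run Wilson's Fundamental Construction in the directed setting exactly as in Lemma 3.1, but now taking the given super-simple DGDD of type $({g_1}^{u_1}\dots{g_N}^{u_N})$ (block size $5$, index $1$) as the master design. Write $(U,\mathcal{G},\mathcal{B})$ for this DGDD, inflate every point by the factor $\alpha$ (replace each $x\in U$ by $\{x\}\times Z_\alpha$), adjoin one new point $\infty$, and build the target design on $(U\times Z_\alpha)\cup\{\infty\}$. For each \emph{ordered} block $b=(x_1,x_2,x_3,x_4,x_5)\in\mathcal{B}$ I place a $5$-GDD of type $\alpha^5$ on $b\times Z_\alpha$ with groups $\{x_1\}\times Z_\alpha,\dots,\{x_5\}\times Z_\alpha$, and I \emph{orient} each of its transversal blocks $\{(x_1,a_1),\dots,(x_5,a_5)\}$ as the $5$-tuple $((x_1,a_1),(x_2,a_2),(x_3,a_3),(x_4,a_4),(x_5,a_5))$, i.e.\ according to the order in which $x_1,\dots,x_5$ occur in $b$. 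For each group $g\in\mathcal{G}$ of size $g_i$ I then substitute a super-simple $2$-$(\alpha g_i+1,5,1)$DD on $(g\times Z_\alpha)\cup\{\infty\}$. The point count is $\alpha\sum_{i=1}^{N}g_iu_i+1$, as required. The reason a mere undirected $5$-GDD suffices here — in contrast to the super-simple DGDD used in Lemma 3.1 — is that in the directed master each unordered between-group pair already occurs in two blocks, once in each orientation, so each block is responsible for only one direction.

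Next I would check that every ordered pair of distinct points occurs exactly once. Take a pair $((x,a),(y,b))$ with $x,y$ in distinct master groups. By the index-$1$ directed property of the master, the ordered pair $(x,y)$ lies in exactly one block $b\in\mathcal{B}$, namely the one in which $x$ precedes $y$; the $5$-GDD on $b\times Z_\alpha$ covers the unordered pair $\{(x,a),(y,b)\}$ once, and the inherited orientation records it as the directed pair $((x,a),(y,b))$. No other master block puts $x$ before $y$, so $((x,a),(y,b))$ is covered exactly once, and the reverse pair is supplied by the unique block in which $y$ precedes $x$. Pairs both of whose points lie in a single master group — whether in the same inflated point $\{x\}\times Z_\alpha$ or in $\{x\}\times Z_\alpha$ and $\{x'\}\times Z_\alpha$ for distinct $x,x'$ of that group — are never produced by any ingredient $5$-GDD, since the five points of any master block lie in five distinct master groups; these pairs, together with every pair meeting $\infty$, are covered exactly once by the super-simple $2$-$(\alpha g_i+1,5,1)$DD placed on the corresponding inflated group. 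Hence the resulting design has index $1$.

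The hard part will be super-simplicity: I must show that any two blocks meet in at most two points. Two blocks from a single ingredient are fine, since a $5$-GDD of index $1$ has any two distinct blocks meeting in at most one point (two common points would be a cross-pair lying in two blocks, against $\lambda=1$) and each group-DD is super-simple by hypothesis; two blocks from different group-DDs meet only in $\infty$; and a group-DD block meets any transversal, $\infty$-free $5$-GDD block in at most one point, because a master block contains at most one point of each master group. The genuine obstacle is the remaining case: blocks $B_1,B_2$ arising from the $5$-GDDs placed on two \emph{different} master blocks $b_1,b_2$. Each $B_j$ is a transversal of $b_j\times Z_\alpha$, so any point shared by $B_1$ and $B_2$ must lie in an inflated group $\{x\}\times Z_\alpha$ with $x\in b_1\cap b_2$, and each such inflated group contributes at most one point to each of $B_1,B_2$; thus $|B_1\cap B_2|\le|b_1\cap b_2|$. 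This is precisely where I invoke the super-simplicity of the \emph{master} DGDD, which guarantees $|b_1\cap b_2|\le 2$. Collecting the cases gives $|B_1\cap B_2|\le 2$ throughout, so the amalgamated design is a super-simple $2$-$(\alpha\sum_{i=1}^{N}g_iu_i+1,5,1)$DD, as claimed.
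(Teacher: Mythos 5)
Your proposal is correct and is exactly the argument the paper intends: Lemma 3.2 is stated without proof as the evident analogue of Lemma 3.1, with the roles reversed (directed super-simple master, undirected $5$-GDD ingredients whose transversal blocks inherit their orientation from the master block, groups filled via the point $\infty$). Your verification of the index-$1$ property and the four-case intersection analysis for super-simplicity supply precisely the details the paper leaves implicit, so there is nothing to correct.
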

In particular, if there is a constant c such that the super-simple DGDD of type $({g_1}^{u_1}{g_2}^{u_2}\dots {g_N}^{u_N})$ and each of the super-simple
$2$-$(\alpha g_i+1,5,1)$DDs has $f\geq c$, then the resulting super-simple $2$-$(v,5,1)$DD in Lemma 3.2 also has $f\geq c$.

\begin{lemma}
For all $n\geq5$ except $n\in\{6,10,14,18,22\}$ there exists a $\{5,6\}$-GDD of type $n^5x^1$, where $x<n$.
\end{lemma}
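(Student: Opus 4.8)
The plan is to obtain the desired design by truncating a transversal design, so that the whole problem reduces to the existence of enough mutually orthogonal Latin squares. Recall that a transversal design $TD(6,n)$ is exactly a $\{6\}$-GDD of type $n^6$, and that a $TD(6,n)$ exists if and only if there are four mutually orthogonal Latin squares (MOLS) of order $n$, i.e. $N(n)\ge 4$. My starting object is therefore a $TD(6,n)$ with groups $G_1,\dots,G_6$, each of size $n$, in which every block meets each group in exactly one point.

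Next I would delete $n-x$ of the $n$ points of the sixth group $G_6$, keeping a subset $G_6'$ of size $x$ with $1\le x< n$. Because every block of the $TD(6,n)$ contains exactly one point of $G_6$, a block keeps size $6$ when its point of $G_6$ is retained and shrinks to size $5$ when that point is deleted; hence every surviving block has size $5$ or $6$, and since $1\le x\le n-1$ both sizes actually occur. The groups are now $G_1,\dots,G_5$ of size $n$ together with $G_6'$ of size $x$, so the type is $n^5x^1$. The pair-coverage condition is immediate: a pair of points in two distinct surviving groups was covered by a unique block of the $TD(6,n)$, and that block still contains both of them, while pairs inside a single group remain uncovered. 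Thus truncation produces a $\{5,6\}$-GDD of type $n^5x^1$ for every admissible $x<n$ whenever a $TD(6,n)$ is available.

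It then remains only to secure the underlying $TD(6,n)$, equivalently four MOLS of order $n$, and this is where the real content lies. Here I would invoke the known spectrum for four MOLS: $N(n)\ge 4$ for all $n\ge 5$ except precisely $n\in\{6,10,14,18,22\}$. For $n=5$ and for prime powers this is trivial since $N(q)=q-1$; the composite orders are handled by the MacNeish product bound $N(mn)\ge\min\{N(m),N(n)\}$ together with the standard tabulated constructions that dispose of the remaining small values. The main obstacle is thus not the construction --- the truncation above is elementary --- but the classical determination of the spectrum of four MOLS, which I would quote from the standard tables rather than reprove; the five excepted orders are exactly those $n\ge 5$ for which no $TD(6,n)$ is (known to be) available, matching the statement.
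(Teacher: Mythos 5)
Your proposal is correct and is essentially the paper's own proof: the authors likewise take a TD$(6,n)$, which exists for all $n\geq5$ except $n\in\{6,10,14,18,22\}$ (they cite the MOLS tables via \cite{PBD} rather than reproving the spectrum), and delete $n-x$ points from one group to obtain the $\{5,6\}$-GDD of type $n^5x^1$. Your additional remarks (the four-MOLS equivalence, that both block sizes occur when $1\le x\le n-1$) are harmless elaborations of the same truncation argument.
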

\begin{proof}
{For all $n\geq5$ except $n\in\{6,10,14,18,22\}$ there exists a TD$(6,n)$ $\cite{PBD}$. We can remove $y$ points from one group to obtain a $\{5,6\}$-GDD of type $n^5x^1$, where $x+y=n$. }
\end{proof}

\begin{lemma}
For all $n$ except $n\in\{2,11,17,23,32\}$ there exists a $\{5,6\}$-GDD of type $5^{4n+1}x^{1}$, where $x\leq 5n$.
\end{lemma}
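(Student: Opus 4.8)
The numerical condition is automatically satisfied: a $\{5\}$-GDD of type $5^{4n+1}$ has each point paired with exactly $5\cdot 4n=20n$ points outside its group, so its replication number is $r=20n/4=5n$, an integer, and $4n+1\equiv 1\pmod 4$ poses no obstruction. The plan is to reduce the lemma to the existence of a \emph{resolvable} $\{5\}$-GDD of type $5^{4n+1}$ (a $5$-RGDD of type $5^{4n+1}$), which necessarily has exactly $5n$ parallel classes, and then to manufacture the group of size $x$ by adjoining ideal points to these classes.

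First I would take a resolvable $\{5\}$-GDD $(V,\mathcal{G},\mathcal{P})$ of type $5^{4n+1}$, where $\mathcal{P}=\{P_1,\dots,P_{5n}\}$ is its set of $5n$ parallel classes; each $P_j$ partitions the $5(4n+1)$ points of $V$ into $4n+1$ blocks of size $5$. Fix any $x$ with $0\le x\le 5n$, introduce $x$ new points $\infty_1,\dots,\infty_x$, and replace every block $B$ of $P_j$ (for $1\le j\le x$) by $B\cup\{\infty_j\}$, leaving the blocks of $P_{x+1},\dots,P_{5n}$ unchanged. Declaring $\{\infty_1,\dots,\infty_x\}$ to be an additional group, I would then verify the three defining properties of the claimed object: the extended blocks have size $6$ and the untouched blocks size $5$, so all block sizes lie in $\{5,6\}$; since each $P_j$ covers every point of $V$ exactly once, each $\infty_j$ is paired with every original point exactly once; and two distinct new points never occur in a common block, so no pair inside the new group is covered. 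This produces precisely a $\{5,6\}$-GDD of type $5^{4n+1}x^{1}$ with $x\le 5n$, the bound $x\le 5n$ being exactly the number of available parallel classes.

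The hard part is not this adjoining step, which is routine, but the combinatorial input on which it rests: the existence of a $5$-RGDD of type $5^{4n+1}$. I would invoke the known spectrum of resolvable $\{5\}$-GDDs with uniform group size $5$; these exist for every admissible $u=4n+1\ge 5$ except for a short list of genuine or undecided values, and it is exactly $u\in\{9,45,69,93,129\}$, equivalently $n\in\{2,11,17,23,32\}$, that must be excluded. Thus the overall structure is: (i) quote the resolvable-GDD existence theorem to obtain the master design for all $n\notin\{2,11,17,23,32\}$; (ii) use the replication-number computation to guarantee its $5n$ parallel classes; and (iii) adjoin up to $5n$ ideal points as above. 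The only real obstacle is pinning down the resolvable $\{5\}$-GDD spectrum and confirming that its exception set coincides with the stated one; any borderline small value still open in the literature would have to be dispatched by a direct construction.
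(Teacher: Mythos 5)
Your proposal is correct and follows essentially the same route as the paper: the paper obtains the $5$-RGDD of type $5^{4n+1}$ with its $5n$ parallel classes from an RBIBD$(20n+5,5,1)$, which exists for all $n\notin\{2,11,17,23,32\}$ by \cite{PBD}, and then adds a group of size $x\le 5n$ by adjoining ideal points to $x$ parallel classes, exactly the completion step you spell out. The only cosmetic difference is that you quote the resolvable-GDD spectrum directly (exceptions $u=4n+1\in\{9,45,69,93,129\}$), whereas the paper passes through the equivalent resolvable BIBD on $20n+5$ points.
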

\begin{proof}
{For all $n$ except $n\in \{2,11,17,23,32\}$ there exists a RBIBD$(20n+5,5,1)$ $\cite{PBD}$. This design has $5n+1$ parallel classes so for all n except $n\in \{2,11,17,23,32\}$ exists a RGDD of type $5^{4n+1}$. We can add a group of size $x$, where $x\leq 5n$ to obtain a $\{5,6\}$-GDD of type $5^{4n+1}x^{1}$. }
\end{proof}

\begin{lemma}
If $q$ is a prime power, then for all $k\leq q+1$ there exists a TD$(k,q)$.
\end{lemma}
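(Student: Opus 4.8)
The plan is to first construct a maximal transversal design TD$(q+1,q)$ directly from the field $GF(q)$, and then obtain every smaller TD$(k,q)$ from it by deleting groups. Since $q$ is a prime power, the field $GF(q)$ exists. I would index the $q+1$ groups by $GF(q)\cup\{\infty\}$, taking each group to be a copy of $GF(q)$: the point set is $V=GF(q)\times(GF(q)\cup\{\infty\})$, and for each $x\in GF(q)\cup\{\infty\}$ the group is $G_x=\{(t,x):t\in GF(q)\}$. This gives $q+1$ groups, each of size $q$.

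For the blocks I would take one block $B_{a,b}$ for each pair $(a,b)\in GF(q)\times GF(q)$, namely
\[
B_{a,b}=\{(ax+b,\,x):x\in GF(q)\}\cup\{(a,\,\infty)\}.
\]
Each such block meets every group in exactly one point, so it is a transversal of size $q+1$, and there are $q^{2}$ blocks in total.

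The key step is to verify that every pair of points lying in distinct groups occurs in exactly one block, which I would split into two cases. If the points are $(\alpha,g)$ and $(\beta,h)$ with $g\neq h$ in $GF(q)$, then the two requirements $ag+b=\alpha$ and $ah+b=\beta$ force $a=(\alpha-\beta)(g-h)^{-1}$ and then determine $b$ uniquely, using that $g-h$ is invertible in the field. If instead one point is $(\alpha,g)$ with $g\in GF(q)$ and the other is $(\gamma,\infty)$, then $a=\gamma$ is forced and $b=\alpha-\gamma g$ follows, again uniquely. Hence $(V,\{G_x\},\{B_{a,b}\})$ is a TD$(q+1,q)$.

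Finally, for any $k\leq q+1$ I would delete $q+1-k$ of the groups together with all their points, truncating each block accordingly; removing one group turns a TD$(m,q)$ into a TD$(m-1,q)$, so iterating produces a TD$(k,q)$. I do not expect a genuine obstacle here: the only substantive content is the invertibility argument in the first case, which is precisely where the prime-power hypothesis (equivalently, the field structure of $GF(q)$) enters, and everything else is routine bookkeeping.
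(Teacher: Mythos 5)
Your proof is correct, but it takes a different route from the paper. The paper's proof is purely citation-based: it invokes the standard equivalence between a TD$(k,g)$ and $k-2$ MOLS of order $g$, together with the classical fact that a prime power $q$ admits a complete set of $q-1$ MOLS, to conclude that a TD$(q+1,q)$ exists, and then truncates groups exactly as you do. You instead build the TD$(q+1,q)$ directly over $GF(q)$, with blocks $B_{a,b}=\{(ax+b,x):x\in GF(q)\}\cup\{(a,\infty)\}$, and verify the $\lambda=1$ property by solving the linear system $ag+b=\alpha$, $ah+b=\beta$ (uniquely, since $g-h$ is invertible) in the generic case and reading off $a=\gamma$, $b=\alpha-\gamma g$ in the $\infty$ case; this is in substance the affine-plane construction, and indeed your design is precisely the TD one would assemble from the complete set of MOLS $L_a(x,y)=ax+y$, $a\neq 0$, so the two arguments describe the same object. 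What your version buys is self-containedness and an explicit verification in which the role of the field hypothesis is visible (invertibility of $g-h$); what the paper's version buys is brevity, at the cost of leaning on two quoted facts, one of which it even states a bit loosely (``for all $q$ there exists $q-1$ MOLS of order $q$'' should of course say prime power $q$). Both proofs handle the descent to $k<q+1$ identically, by deleting $q+1-k$ groups and truncating blocks, which is unproblematic since cross pairs among the surviving groups are still covered exactly once.
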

\begin{proof}
{It is well-known that a TD$(k,g)$ is equivalent to $k-2$ mutually
orthogonal Latin square (MOLS) of order $g$ and also for all $q$
there exists $q-1$ MOLS of order $q$. So for all $q$ there exists a
TD$(q+1,q)$. Now by deleting all the elements of $x$ groups, a
TD$(k,q)$ can be obtained, where $x+k=q+1$. }
\end{proof}

\Large{$v\equiv1\ ({\rm mod}\ 10)$}

\normalsize
\begin{lemma}
For $v=11$ there exists a super-simple $2$-$(11,5,1)$DD with $f\geq\frac{2}{11}$ and for all $v\in\{21,31,41\}$ there exists a super-simple $2$-$(v,5,1)$DD with $f\geq\frac{1}{2}$.
\end{lemma}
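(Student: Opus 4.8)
The plan is to produce each of the four designs by a direct construction over the cyclic group $Z_v$, presenting a few base blocks and developing them $(+1\ {\rm mod}\ v)$ as in the DGDD constructions of Section 2. Recall that a $2$-$(v,5,1)$DD has $v(v-1)/10$ blocks, since each $5$-tuple covers ten ordered pairs; thus the four designs have $11,\ 42,\ 93$ and $164$ blocks. For each construction three things must be checked: that every ordered pair of $Z_v$ is covered exactly once (a difference condition on the base blocks), that the underlying $2$-$(v,5,2)$-BIBD is super-simple, i.e.\ that no two developed blocks meet in three points (again a finite difference check on the base blocks), and that enough mutually disjoint directed trades lie inside the design to force the claimed value of $f$ via Proposition 1.1 and the disjoint-trade bound following it.

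For $v=21$ a suitable pair of base blocks is already displayed in the Introduction: $(0,\ 1,\ 4,\ 14,\ 16)$ and $(1,\ 0,\ 18,\ 8,\ 6)$, developed $(+1\ {\rm mod}\ 21)$. The point is that these two base blocks already constitute a volume-$2$ trade, obtained by interchanging their first two coordinates; since this is an \emph{adjacent} transposition it flips only the ordered pair $\{0,1\}$ and leaves every other ordered pair fixed. Their $21$ translates therefore give $21$ mutually disjoint volume-$2$ trades, so $f\geq 21/42=\frac12$. For $v=41$ I would exhibit, in the same spirit, base blocks over $Z_{41}$ whose $164$ translates partition into $82$ disjoint volume-$2$ trades, again giving $f\geq\frac12$. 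The case $v=31$ needs one extra idea because its $93$ blocks form an \emph{odd} number: a decomposition into disjoint trades of volumes $s_1,\dots,s_m$ forces $\sum_i\lceil s_i/2\rceil=\frac{93}{2}+\frac12\,\#\{i:s_i\ {\rm odd}\}$ blocks, so volume-$2$ trades alone force only $46$ blocks and yield $f\geq 46/93<\frac12$. Hence for $v=31$ the base blocks must be organised so that at least one odd-volume cyclical trade appears (for example a family of volume-$3$ cyclical trades among otherwise volume-$2$ trades); any single odd trade raises the forced total to at least $47$ and gives $f\geq 47/93>\frac12$.

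The value $v=11$ is genuinely different and is where I expect the real difficulty to lie. Its underlying $2$-$(11,5,2)$-BIBD is the unique biplane of order $3$, a symmetric $2$-design in which any two blocks meet in exactly two points; super-simplicity is therefore automatic, and a directed orientation exists by $\cite{DBIBD}$. What remains is to orient the eleven biplane blocks so that each unordered pair $\{a,b\}$ occurs once as $(a,b)$ and once as $(b,a)$ — a step I would carry out by a short direct search — and then to locate two \emph{disjoint} volume-$2$ trades. Each such trade asks for two blocks sharing a common pair that occurs adjacently and with opposite orientation in the two blocks; since any two points lie in exactly two blocks of a biplane the candidate block pair is forced, and one only needs the shared pair to be adjacent in both. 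Two disjoint trades force two blocks into every defining set, giving $f\geq\frac{2}{11}$. The genuine obstacle is structural: the rigidity of the biplane — eleven blocks, any two meeting in a single forced pair — prevents assembling anything close to $\lceil 11/2\rceil$ disjoint or cyclical trades, so the bound cannot be pushed to $\frac12$. This is precisely why $v=11$ survives as a possible exception in the main theorem.
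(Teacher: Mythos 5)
Your strategy is the paper's own: cyclic base-block constructions in which a trade structure among the base blocks develops into many mutually disjoint trades, combined with the disjoint-trade lower bound on defining sets. For $v=21$ you use exactly the paper's two base blocks and the same $21$ disjoint volume-$2$ trades. Your parity analysis for $v=31$ is a nice touch and correctly predicts the paper's device: its three base blocks $(0,4,29,12,28),\ (13,4,0,5,15),\ (27,2,15,5,22)$ developed $(+1\ {\rm mod}\ 31)$ make all $93$ blocks a single cyclical trade, forcing $[\frac{93+1}{2}]=47$ blocks and $f\geq\frac{47}{93}>\frac{1}{2}$, precisely the odd-volume escape you identified. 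For $v=11$ your route to the lower bound (two disjoint trades force two blocks into every defining set, hence $f\geq\frac{2}{11}$) is arguably cleaner than the paper's, which exhibits an explicit $11$-block design and a specific defining set of two blocks; strictly, exhibiting a size-$2$ defining set bounds the smallest defining set from \emph{above}, so your trade argument is the right way to read the inequality. Your biplane observation is also a genuine simplification absent from the paper: since any two blocks of the unique $2$-$(11,5,2)$ design meet in exactly two points, super-simplicity is automatic --- though you should close the small hole that the underlying BIBD has no repeated block (a short count does it: each point of a repeated block $B$ has replication $r=5$, so lies in $3$ further blocks, each meeting $B$ in at most one point, giving $15$ incidences against at most $9$).

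The genuine gap is that for $v=41$, $v=31$ and $v=11$ you never produce the constructions. Saying you ``would exhibit base blocks over $Z_{41}$ whose $164$ translates partition into $82$ disjoint volume-$2$ trades'' restates the lemma rather than proving it: the entire content here is the verification that concrete $5$-tuples simultaneously satisfy the difference condition (each nonzero ordered difference covered once), the super-simplicity condition (no two developed blocks sharing three points), and the trade structure; similarly for $v=31$ the odd-volume cyclical trade must actually be located, and for $v=11$ the orientation of the biplane and the two disjoint volume-$2$ trades are deferred to ``a short direct search''. The paper discharges all of this with explicit witnesses --- for $v=41$ the two swapped pairs $(0,1,4,11,29),(1,0,38,31,13)$ and $(0,2,8,17,22),(2,0,35,26,21)$ developed mod $41$, for $v=31$ the triple above, and for $v=11$ a full list of eleven blocks. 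Without such data (or a citation supplying it), your text is a correct and well-motivated plan, but not yet a proof of the lemma.
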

\begin{proof}
{For $v=11$, the blocks
$\{$$\bf{12345,51678},$$\normalsize6319a,98410,270a1,30286,85a29,74962,4a837,\\09753,a6504\}$
construct a  super-simple $2$-$(11,5,1)$DD in which the two 5-tuples
in bold font form its defining set. So for this super-simple
$2$-$(11,5,1)$DD we have $f\geq\frac{2}{11}$.

For $v=21$, develop the two base blocks $$(0,\ 1,\ 4,\ 14,\ 16),\ \ \
\ (1,\ 0,\ 18,\ 8,\ 6)$$ modulo 21 to obtain a super-simple
$2$-$(21,5,1)$DD with $f\geq\frac{1}{2}$, (see
$\cite{pentagon2004}$).

For $v=31$, using the following base blocks $(+1\ {\rm mod}\
31)$, a super-simple $2$-$(31,5,1)$DD with $f\geq\frac{1}{2}$ can be
constructed, (see $\cite{pentagon2004}$).
$$
\begin{array}{l}
(0,\ 4,\ 29,\ 12,\ 28),\ \ (13,\ 4,\ 0,\ 5,\ 15),\ \ (27,\ 2,\ 15,\ 5,\ 22).\\
\end{array}
$$
These base blocks contain a cyclical trade of volume 93. So this design has $f>\frac{1}{2}$.

For $v=41$, using the following base blocks $(+1\ {\rm mod}\
41)$, a super-simple $2$-$(41,5,1)$DD with $f\geq\frac{1}{2}$ can be
constructed, (see $\cite{pentagon2004}$).
$$
\begin{array}{l}
(0,\ 1,\ 4,\ 11,\ 29),\ \ (1,\ 0,\ 38,\ 31,\ 13) \\
(0,\ 2,\ 8,\ 17,\ 22),\ \ (2,\ 0,\ 35,\ 26,\ 21).\\
\end{array}
$$
}
\end{proof}
\begin{theorem}
For all $v\equiv1\ ({\rm mod}\ 10);\ v>11$, except possibly
$v=91$, there exists a super-simple $2$-$(v,5,1)$DD with
$f\geq\frac{1}{2}$.
\end{theorem}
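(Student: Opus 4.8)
The plan is to argue by induction on $v$, using Wilson's Fundamental Construction (Lemmas 3.1 and 3.2) to reduce large orders to smaller admissible ones, and giving direct constructions for a bounded set of base orders. The base of the induction consists of the orders $v\in\{21,31,41\}$ supplied by Lemma 3.6 together with a handful of direct constructions for the first orders beyond them, namely $v\in\{51,61,71,81\}$; for each of these I would exhibit base blocks over a cyclic or product group, as in Section 2, and count disjoint directed (cyclical) trades to certify $f\ge\frac12$. Every such base design is also kept as an ingredient to be substituted into the groups of later constructions.

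For the inductive step I would write $v=\alpha w+1$, where $w=\sum_i g_iu_i$ is the number of points of a $\{5,6\}$-GDD obtained from Lemma 3.3 or Lemma 3.4 and $\alpha\in\{4,6,10\}$ is chosen so that the super-simple DGDDs of types $(\alpha^5)$ and $(\alpha^6)$ built in Section 2 are available. Applying Lemma 3.1 then replaces each block of size $5$ or $6$ by the corresponding DGDD, replaces each full group of size $g_i$ by a super-simple $2$-$(\alpha g_i+1,5,1)$DD, and replaces the short group of size $x$ by a super-simple $2$-$(\alpha x+1,5,1)$DD; since all of these ingredients have $f\ge\frac12$, the remark following Lemma 3.1 gives $f\ge\frac12$ for the resulting super-simple $2$-$(\alpha w+1,5,1)$DD. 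Taking $\alpha=4$ with the type $5^{4n+1}x^1$ GDDs of Lemma 3.4 (with $x\equiv0\ ({\rm mod}\ 5)$, so that each group order $4x+1$ is again admissible) produces the orders $v\equiv1\ ({\rm mod}\ 20)$, while $\alpha=6$ produces the orders $v\equiv11\ ({\rm mod}\ 20)$, and the type $n^5x^1$ GDDs of Lemma 3.3 together with the short-group DGDDs of type $(g^um^1)$ fed through Lemma 3.2 (whose $5$-GDDs of type $\alpha^5$ come from Lemma 3.5) fill the orders that these miss. Because every group order $\alpha g_i+1$ and $\alpha x+1$ is strictly smaller than $v$ and again $\equiv1\ ({\rm mod}\ 10)$, the induction descends to the base orders.

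The main obstacle, and the bulk of the work, is the exceptional parameters. Lemma 3.3 fails for $n\in\{6,10,14,18,22\}$, Lemma 3.4 fails for $n\in\{2,11,17,23,32\}$, and Lemma 3.5 delivers the transversal designs $5$-GDD of type $\alpha^5$ only when $\alpha$ is a prime power; each forbidden value spawns finitely many orders $v$ for which the first-choice recursion is unavailable, and for each I would recover the design by switching $\alpha$, switching between Lemmas 3.3 and 3.4, or by an ad hoc direct construction. Finally I expect $v=91$ to be genuinely resistant: its factorisations $91=\alpha w+1$ force a $\{5,6\}$-GDD on too few points for Lemmas 3.3--3.4 to apply, and it is not reached by any of the direct constructions of Section 2; this is precisely why it is listed as a possible exception. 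The verification therefore reduces to careful bookkeeping---checking that for every $v\ge 101$ and every $v\in\{51,61,71,81\}$ at least one admissible pair $(\alpha,{\rm GDD})$ exists, that all of its ingredients have already been produced with $f\ge\frac12$, and in particular that no branch ever requires the missing order $91$ as an ingredient.
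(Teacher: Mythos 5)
Your overall philosophy (Wilson's Fundamental Construction plus direct base constructions plus a finite exception list) matches the paper's, but the engine you chose is genuinely different from, and strictly weaker than, the one the paper uses for this theorem. The paper's main recursion is: for every odd $n$ outside an explicit finite set $A$ there is a PBD$(n,\{5,7,9\})$; deleting a point yields a $\{5,7,9\}$-GDD of type $4^a6^b8^c$, and Lemma 3.1 with $\alpha=5$, the super-simple DGDDs of types $(5^5),(5^7),(5^9)$ from Section 2, and the designs of orders $21,31,41$ produce $v=5(n-1)+1$ for all such $n$ in one stroke. Your $\{5,6\}$-GDD engine with $\alpha\in\{4,6,10\}$ is essentially the device the paper reserves for Theorems 3.2 and 3.3 ($v\equiv5,15\pmod{20}$), where all group sizes can be kept $\equiv0\pmod 5$; for $v\equiv1\pmod{10}$ it provably cannot reach several orders in the theorem's range, so the ``careful bookkeeping'' you defer is not bookkeeping but the missing core of the proof.

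Concretely: for $v=111$ your choices force $\alpha=10$, $w=11$ (since $4\nmid 110$ and $6\nmid 110$), and no $\{5,6\}$-GDD on $11$ points exists at all --- blocks meet each group in at most one point, and a short counting/pigeonhole check eliminates every candidate group type (e.g., for type $2^51^1$ the two forced blocks of size $6$ through the singleton leave $20$ cross pairs that would require size-$5$ transversals of the five $2$-groups, which must repeat an already-covered pair). Similarly $v=131$ ($\alpha=10$, $w=13$) and $v=161$ ($w=16$ or $40$) admit no usable $\{5,6\}$-GDD; the paper handles these by direct base blocks over $Z_3\times Z_{37}$ and $Z_{131}$ and by a bespoke super-simple DGDD of type $(20^8)$, and it further needs the special ingredients $(10^7)$, $(6^5\,8^1)$ and the family $(30^x\,g^1)$ for $v=71$, $v=191$ and the ten largest exceptional orders --- none of which your plan anticipates, and none of which is recoverable by ``switching $\alpha$.'' Two additional flaws in your $f$-accounting: whenever a group order becomes $11$ (e.g., $\alpha=10$, $x=1$) the only available fill has $f\geq\frac{2}{11}$, so the remark after Lemma 3.1 no longer gives $f\geq\frac12$ --- the paper's single $11$-point fill (inside the $(10^7)$ DGDD for $v=71$) is justified by an explicit trade count instead; and your appeal to Lemma 3.5 for the $5$-GDDs of type $\alpha^5$ needed in Lemma 3.2 fails at $\alpha=6$, since $6$ is not a prime power and a TD$(5,6)$ does not exist. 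Finally, for a result of this kind the direct constructions with verified disjoint (cyclical) trades \emph{are} the mathematical content; asserting that you ``would exhibit base blocks'' for $51,61,71,81$ and the ad hoc cases leaves the claim $f\geq\frac12$ uncertified everywhere the recursion bottoms out.
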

\begin{proof}
{For $v=21,\ 31,\ 41$, see the previous Lemma. For all $n\equiv1\
({\rm mod}\ 2)$ except $n\in A =
\{11,\dots,19,23,27,\dots,33,39,43,51,59,71,75,83,87,95,99,107,111,113,115,119,139,179\}$
there exists a PBD$(n, \{5, 7, 9\})$ $\cite{mullin}$. We can remove
one point from this PBD to form a $\{5, 7, 9\}$-GDD of order $n-1$
of type $4^{a}6^{b}8^{c}$, where $a,\ b,\ c$ are non-negative
integers. Applying Lemma 3.1 with $\alpha=5$ and using the required
super-simple DGDDs and also the required super-simple
$2$-$(5g_i+1,5,1)$DDs, $g_i\in\{4,6,8\}$ with $f\geq\frac{1}{2}$, we
obtain a super-simple $2$-$(v,5,1)$DD with
 $f\geq\frac{1}{2}$.

Now for the rest of the values $n\in A$, we
construct a super-simple $2$-$(5(n-1)+1,5,1)$DD with $f\geq\frac{1}{2}$ as follows.\\
\noindent $\bullet\ \ \ n=11$: using the following base blocks
$(+1\ {\rm mod}\ 51)$, a super-simple $2$-$(51,5,1)$DD can be
constructed, (see $\cite{pentagon2004}$).
$$
\begin{array}{l}
(1,\ 0,\ 14,\ 31,\ 35),\ \ \ (14,\ 0,\ 42,\ 24,\ 43) \\
(42,\ 0,\ 11,\ 18,\ 16),\ \ (11,\ 0,\ 6,\ 47,\ 8),\ \ (21,\ 44,\ 15,\ 8,\ 47).\\
\end{array}
$$
This super-simple $2$-$(51,5,1)$DD has 51 disjoint directed trades of volume 2 in the first row above and three cyclical directed trades of volume 51 in the other row. So for this design we have $f\geq\frac{129}{255}>\frac{1}{2}$.\\
\noindent $\bullet\ \ \ n=13$: using the following base blocks
 $(+1\ {\rm mod}\ 61)$, a super-simple $2$-$(61,5,1)$DD
with $f\geq\frac{1}{2}$ can be constructed, (see
$\cite{pentagon2004}$).
$$
\begin{array}{l}
(0,\ 1,\ 3,\ 13,\ 34),\ \ \ (1,\ 0,\ 59,\ 49,\ 28) \\
(0,\ 4,\ 9,\ 23,\ 45),\ \ \ (4,\ 0,\ 56,\ 42,\ 20) \\
(0,\ 6,\ 17,\ 24,\ 32),\ \ (6,\ 0,\ 50,\ 43,\ 35). \\
\end{array}
$$
\noindent $\bullet\ \ \ n=15$: first we form a super-simple  DGDD of type $(10^7)$
with the following base blocks, (see $\cite{pentagon}$).
$$
\begin{array}{l}
(39,\ 2,\ 28,\ 0,\ 1),\ \ \ \ \ \ \ (46,\ 53,\ 43,\ 2,\ 39),\ \ (\infty_5,\ 33,\ 28,\ 2,\ 43) \\
(47,\ 22,\ \infty_6,\ 0,\ 39) \\
(15,\ 35,\ \infty_0,\ 0,\ 43),\ \ (31,\ 35,\ 15,\ 2,\ \infty_1) \\
(0,\ 51,\ 2,\ 5,\ \infty_8),\ \ \ \ \ (\infty_8,\ 5,\ 56,\ 7,\ 10)\\
(\infty_9,\ 7,\ 58,\ 9,\ 12),\ \ \ (2,\ 53,\ 4,\ 7,\ \infty_9). \\
\end{array}
$$
The pentagons in the first three rows above must be developed by $(+1\ {\rm mod}\ 60)$, $\infty_6$ must be replaced by $\infty_7$ when adding odd values, $\infty_0$ must be replaced by $\infty_y$ when adding any values congruent to $y\ ({\rm mod}\ 6)$ and for $x= 1,\ 5$, $\infty_x$ must be replaced by $\infty_{\alpha};\ \alpha\equiv x+y\ ({\rm mod}\ 6)$ when adding any value $y$.
The pentagons in the last two rows above must be developed by $(+4\ {\rm mod}\ 60)$. Eventually to make a super-simple $2$-$(71,5,1)$DD we can add a point to all groups in the super-simple  DGDD of type $(10^7)$ and put a $2$-$(11,5,1)$DD.
This super-simple $2$-$(71,5,1)$DD has 60 cyclical directed trades of volume 3 in the first row and each of other rows above contain 30, 60, 15, 15 disjoint directed trades of volume 2, respectively.  So this design has $f\geq\frac{2*7+2*60+30+60+15+15}{497}>\frac{1}{2}$.\\
\noindent $\bullet\ \ \ n=17$: using the following base blocks
$(+1\ {\rm mod}\ 81)$, a super-simple $2$-$(81,5,1)$DD with
$f\geq\frac{1}{2}$ can be constructed, (see $\cite{pentagon2004}$).
$$
\begin{array}{l}
(0,\ 1,\ 5,\ 12,\ 26),\ \ \ (1,\ 0,\ 77,\ 70,\ 56) \\
(0,\ 2,\ 10,\ 40,\ 64),\ \ (2,\ 0,\ 73,\ 43,\ 19) \\
(0,\ 3,\ 18,\ 47,\ 53),\ \ (3,\ 0,\ 66,\ 37,\ 31) \\
(0,\ 9,\ 32,\ 48,\ 68),\ \ (9,\ 0,\ 58,\ 42,\ 22).
\end{array}
$$
\noindent $\bullet\ \ \ n=23$: using the following base blocks $(+1\ {\rm mod}\ 3,\ +1\ {\rm mod}\ 37)$,
a super-simple $2$-$(111,5,1)$DD with $f\geq\frac{1}{2}$ can be constructed, (see $\cite{DBIBD}$).
$$
\begin{array}{l}
((0 , 36),\ (1 , 0),\ (1 , 35),\ (2 , 28),\ (2 , 7)),\ \ ((2 , 36),\ (1 , 35),\ (1 , 0),\ (0 , 22),\ (0 , 13)),\ \\
((0 , 15),\ (0 , 22),\ (1 , 0),\ (0 , 33),\ (0 , 4)),\ \ ((2 , 6),\ (0 , 16),\ (0 , 33),\ (1 , 0),\ (1 , 12)),\ \\
((0 , 16),\ (2 , 6),\ (2 , 26),\ (1 , 24),\ (1 , 8))\\
((0 , 0),\  (1 , 26),\ (1 , 11),\ (2 , 14),\ (2 , 23)),\ ((2 , 0),\ (1 , 11),\ (1 , 26),\ (0 , 6),\ (0 , 31))\\
((0 , 20),\ (0 , 17),\ (1 , 0),\ (0 , 30),\ (0 , 7)),\ \ ((1 , 8),\ (0 , 24),\ (0 , 17),\ (0 , 20),\ (2 , 8))\\
((0 , 35),\ (0 , 2),\ (1 , 0),\ (0 , 3),\ (0 , 34)),\ \ \ ((2 , 16),\ (0 , 11),\ (0 , 34),\ (0 , 3),\ (1 , 16)).\\
\end{array}
$$
This super-simple $2$-$(111,5,1)$DD has 333 disjoint directed trades
of volume 2 in the last three rows above and the first three rows
contain a cyclical trade of volume 555. So this design has
$f\geq\frac{611}{1221}>\frac{1}{2}$.

\noindent $\bullet\ \ \ n=27$: using the following base blocks $(+1\ {\rm mod}\ 131)$, a super-simple $2$-$(131,5,1)$DD with $f\geq\frac{1}{2}$ can be constructed, (see $\cite{pentagon2004}$).
$$
\begin{array}{l}
(75,\ 4,\ 1,\ 127,\ 32),\ \ \ \ \ \ \ (84,\ 89,\ 56,\ 1,\ 4),\ \ \ \ \ \ \ \ \ (1,\ 56,\ 5,\ 86,\ 38),\ \ \ \ \ \ \ (86,\ 5,\ 49,\ 23,\ 19) \\
(87,\ 113,\ 73,\ 49,\ 5),\ \ \ \ \ \ (49,\ 73,\ 24,\ 26,\ 64),\ \ \ \ \ \ (26,\ 24,\ 11,\ 33,\ 58),\ \ \ (65,\ 31,\ 43,\ 11,\ 24) \\
(101,\ 121,\ 120,\ 43,\ 31),\ \ (104,\ 51,\ 62,\ 120,\ 121),\ \ (6,\ 78,\ 68,\ 62,\ 51),\ \ \ (62,\ 68,\ 23,\ 89,\ 33) \\
(89,\ 23,\ 118,\ 58,\ 97).\\
\end{array}
$$
These base blocks contain a cyclical trade of volume 1703. So this design has $f>\frac{1}{2}$.

\noindent $\bullet\ \ \ n=29$: begin with a super-simple DGDD of type $(5^7)$. Using Lemma 3.2 with $\alpha=4$, a $5$-GDD of type $4^5$ and a
super-simple $2$-$(21,5,1)$DD we construct a super-simple $2$-$(141,5,1)$DD with $f\geq\frac{1}{2}$.\\
\noindent $\bullet\ \ \ n=31$: begin with a $5$-GDD of type $5^5$. Using Lemma 3.1 with $\alpha=6$,  we obtain a super-simple $2$-$(151,5,1)$DD with $f\geq\frac{1}{2}$.\\
\noindent $\bullet\ \ \ n=33$: using the following base blocks $(+1\ {\rm mod}\ 160)$, a super-simple DGDD of type $(20^8)$ with $f\geq\frac{1}{2}$ can be constructed, (see $\cite{HSPS}$).
$$
\begin{array}{l}
(0,\ 11,\ 21,\ 38,\ 135),\ \ (11,\ 0,\ 150,\ 133,\ 36)\\
(0,\ 3,\ 33,\ 95,\ 110),\ \ \ (3,\ 0,\ 130,\ 68,\ 53)\\
(0,\ 2,\ 31,\ 89,\ 101),\ \ \ (2,\ 0,\ 131,\ 73,\ 61)\\
(0,\ 6,\ 55,\ 81,\ 115),\ \ \ (6,\ 0,\ 111,\ 85,\ 51)\\
(0,\ 7,\ 35,\ 76,\ 113),\ \ \ (7,\ 0,\ 132,\ 91,\ 54)\\
(0,\ 1,\ 23,\ 67,\ 141),\ \ \ (1,\ 0,\ 138,\ 94,\ 20)\\
(0,\ 9,\ 14,\ 117,\ 156),\ \ (9,\ 0,\ 155,\ 52,\ 13).\\
\end{array}
$$
Now, using an extra point, fill in the holes and apply the super-simple $2$-$(21,5,1)$DD to construct a super-simple $2$-$(161,5,1)$DD with $f\geq\frac{1}{2}$.\\
\noindent $\bullet\ \ \ n=39$: begin with a $6$-GDD of type $5^6$ $\cite{PBD}$. Give weight 6 to all points in the five groups and weight 8 to the last group then using the super-simple DGDD of type $(6^{5} 8^{1})$, the super-simple $2$-$(31,5,1)$DD and the super-simple $2$-$(41,5,1)$DD we obtain a super-simple $2$-$(191,5,1)$DD with $f\geq\frac{1}{2}$.\\
\noindent $\bullet\ \ \ n=43$, there exists a TD$(6,9)$ $\cite{PBD}$, add a point $y$ to the groups, then delete a different point so as to form a $\{6,10\}$-GDD of type $5^{9} 9$. Give weight 4 to all points in the groups of size 5 and in the last group give weight 4 to $y$, weight 4 to 6 points and weight 0 to the rest. The point $y$ is in the blocks of size 10 so we replace any of them by a super-simple DGDD of type $(4^{10})$ and for blocks of size 6 use a super simple DGDD of type $(4^5)$, $(4^6)$. This yields a super-simple DGDD of type $(20^9\ 28)$, finally add three new points to groups, then on each of 9 groups of size 20 together with the three new points construct a super-simple DGDD of type $(1^{20}\ 3^1)$ and on the last group together with the three new points construct a super-simple $2$-$(31,5,1)$DD.\\
\noindent $\bullet\ \ \ n=51$: begin with a $5$-GDD of type $5^5$ $\cite{PBD}$. Using Lemma 3.1 with $\alpha=10$,  we obtain a super-simple $2$-$(251,5,1)$DD with $f\geq\frac{1}{2}$.\\
\noindent $\bullet\ \ \ n=59$: in Lemma 3.4 it is shown that there exists a $\{5,6\}$-GDD of type $5^{5}4^{1}$. Now using Lemma 3.1 with $\alpha=10$ we get a super-simple $2$-$(291,5,1)$DD with $f\geq\frac{1}{2}$.\\
\noindent $\bullet\ \ \ n=71$: begin with a $5$-GDD of type $14^5$ $\cite{PBD}$. Using Lemma 3.1 with $\alpha=5$,  we obtain a super-simple $2$-$(351,5,1)$DD with $f\geq\frac{1}{2}$.\\
\noindent $\bullet\ \ \ n\in \{75,83\}$: in Lemma 3.3 it is shown that there exist a $\{5,6\}$-GDD of type $7^{5}2^{1}$ and a $\{5,6\}$-GDD of type $7^{5}6^{1}$. Now using Lemma 3.1 with $\alpha=10$ we get a super-simple $2$-$(371,5,1)$DD and a super-simple $2$-$(411,5,1)$DD with $f\geq\frac{1}{2}$, respectively.\\
\noindent $\bullet\ \ \ n\in \{87,95,99,107,111,113,115,119,139,179\}$: in Lemma 3.4 it is shown that for all $x\in \{13,17,21,25\}$ there exists a $\{5,6\}$-GDD of type $5^{x}k^{1}$, where $k\leq\frac{5(x-1)}{4}$. Give weight 6 to all points in the first $x$ groups and weight 0 or 6 or 8 to the points in the last group then use the super-simple DGDD of type $(6^{5} 8^{1})$, super-simple DGDD of type $(6^{5})$, super-simple DGDD of type $(6^{6})$. If we give weight 6 to all points in the first $x$ groups and weight 6 to two points and weight 8 to one point in the last group or weight 8 to all points in the last group of $\{5,6\}$-GDD of type $5^{x}k^{1}$ then we can form a super-simple DGDD of type $(30^{x} 20^{1})$ or super-simple DGDD of type $(30^{x} (10(x-1))^{1})$, respectively. Generally for all $x\in \{13,17,21,25\}$ there exists a super-simple DGDD of type $(30^{x} g^{1})$ where $20\leq g\leq10(x-1)$. Since there is a super-simple $2$-$(g+1,5,1)$DD where $g+1\equiv1\ ({\rm mod}\ 10)$ (except $g=90$), we can add a new point to this super-simple DGDD of type $(30^{x} g^{1})$ and construct a super-simple $2$-$(30x+g+1,5,1)$DD with $f\geq\frac{1}{2}$. For example for $n=115$ we can take $x=17$ and give weight 6 to all points in the first 17 groups and weight 6 to ten points of the last group of a $\{5,6\}$-GDD of type $5^{17}20^{1}$. This yields a super-simple DGDD of type $(30^{17} 60^{1})$ now add a new point to this super-simple DGDD to construct a super-simple $2$-$(571,5,1)$DD with $f\geq\frac{1}{2}$. }
\end{proof}
\Large{$v\equiv5\ ({\rm mod}\ 20)$}

\normalsize
\begin{lemma}
For all  $v\in\{25,45,65,85,105\}$ there exists a super-simple $2$-$(v,5,1)$DD with $f\geq\frac{1}{2}$.
\end{lemma}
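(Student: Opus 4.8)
The plan is to dispose of these five smallest values of $v\equiv 5\ ({\rm mod}\ 20)$ by direct construction, in the same spirit as the cases $v=21,31,41$ of Lemma 3.6 and the small cases $n$ of Theorem 3.7. For each $v\in\{25,45,65,85,105\}$ I would exhibit a short list of base $5$-tuples together with a generating automorphism of an abelian group of order $v$ (typically $Z_v$ developed by $(+1\ {\rm mod}\ v)$, or a product group such as $\{0,1,2,3,4\}\times Z_{5}$ when $v=25$). The underlying super-simple $2$-$(v,5,2)$ designs needed here already exist in the literature on super-simple pentagon systems, so the base blocks can be taken or adapted from $\cite{pentagon2004}$ and $\cite{DBIBD}$; the directed refinement is obtained by orienting the pentagons so that the two blocks covering a given unordered pair receive opposite orientations.

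For each value three things have to be checked. \emph{(i) Exact coverage:} every ordered pair of distinct points must occur in exactly one block, which for a cyclic construction reduces to verifying that the multiset of ordered differences arising from the base blocks hits each nonzero element of the group exactly once (with the obvious modification for any infinity points and for the mixed development used when $v=25$). \emph{(ii) Super-simplicity:} no two blocks of the underlying $2$-$(v,5,2)$-BIBD may meet in three or more points, which again reduces to a finite difference condition on the base blocks and their translates. \emph{(iii) The defining-set bound} $f\ge\frac12$, where the total number of blocks is $b=\frac{v(v-1)}{10}$.

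For part (iii) I would use the two mechanisms already set up in the paper. First, whenever two base blocks have the same support and differ by a single transposition, their common development produces mutually disjoint directed trades of volume $2$, each of which forces at least one block into every defining set (by Proposition 1.1). Second, a base block whose orbit forms a cyclical $(v,5,2)$ directed trade of volume $s$ forces at least $[\frac{s+1}{2}]$ of its blocks into every defining set. Summing these guaranteed contributions over all orbits and dividing by $b$ should give $f\ge\frac12$ in each case, exactly as in the displayed computations for $v=51,71,111$ in Theorem 3.7.

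The main obstacle will be the value $v=25$. Here $b=\frac{25\cdot 24}{10}=60$ is not a multiple of $25$, so a full-length cyclic development over $Z_{25}$ is impossible: it would force short orbits of length $5$, i.e. blocks that are cosets of the subgroup $\langle 5\rangle$, whose internal differences all lie in $\{5,10,15,20\}$ and which are therefore incompatible with super-simplicity and exact coverage. Consequently $v=25$ must instead be built on $\{0,1,2,3,4\}\times Z_{5}$ by developing only the second coordinate (orbits of length $5$, with $12$ base blocks yielding the required $60$ blocks), as was done for the DGDDs of types $(5^5)$ and $(6^5)$, and its trades counted by hand. A secondary difficulty is that the bound $f\ge\frac12$ is tight, so for the remaining values the base blocks must be chosen so that each orbit decomposes either into volume-$2$ trades or into cyclical trades long enough that $[\frac{s+1}{2}]$ is at least half the orbit; verifying this for all five values is the bulk of the work, though each individual check is routine.
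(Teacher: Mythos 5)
Your proposal is a strategy outline rather than a proof: for a lemma whose entire content is the existence of five explicit designs, the base blocks \emph{are} the proof, and you exhibit none. Your mechanism for the bound $f\geq\frac{1}{2}$ does match the paper exactly (disjoint directed trades of volume $2$ from paired base blocks, plus cyclical trades contributing $[\frac{s+1}{2}]$ blocks each, summed over orbits), and the paper does proceed by direct construction for $v=25,45,65,85$. But the structures are not the ones you predict. For $v=25$ your impossibility argument against a pure cyclic construction over $Z_{25}$ is essentially sound ($60$ blocks forces two short orbits on cosets of $\langle 5\rangle$, hence repeated underlying blocks meeting in five points), yet your conclusion that one \emph{must} build on $\{0,1,2,3,4\}\times Z_5$ overclaims: the paper instead works on $\{0,1\}\times Z_{12}\cup\{\infty\}$, developing five base blocks by $(-,+1\ {\rm mod}\ 12)$, one of them through the fixed point $\infty$. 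Likewise $v=45$ and $v=65$ are built under an automorphism consisting of two disjoint cycles, $(0,\dots,21)(22,\dots,43)$ and $(0,\dots,31)(32,\dots,63)$, each with an additional fixed point $\infty$, and $v=85$ uses $Z_{74}$ developed by $+2$ together with eleven points $\infty_0,\dots,\infty_{10}$ carrying a filled-in $2$-$(11,5,1)$DD. These mixed developments with infinite points and short-step development are exactly where your ``routine'' difference-counting checks (i) and (ii) need the nontrivial modifications you only gesture at.

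The more serious gap is $v=105$: the paper gives no direct construction at all, which suggests one was not readily available. It takes a TD$(5,5)$, adds a point $y$ to the groups and deletes a different point to form a $\{5,6\}$-GDD of type $4^5 5^1$, gives weight $4$ to obtain a super-simple DGDD of type $(16^5\ 20^1)$, then adds five new points and fills the groups with super-simple DGDDs of type $(1^{16}\ 5^1)$ and a super-simple $2$-$(25,5,1)$DD. Your fallback assumption that base blocks for $v=105$ ``can be taken or adapted'' from \cite{pentagon2004} or \cite{DBIBD} is unsupported; moreover, your proposed shortcut of starting from an undirected super-simple $2$-$(v,5,2)$ design and ``orienting the pentagons so that the two blocks covering a given unordered pair receive opposite orientations'' is itself a nontrivial constraint-satisfaction problem with no guarantee of a solution, let alone one whose orbits decompose into the trades needed for $f\geq\frac{1}{2}$. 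The paper avoids this issue entirely by quoting constructions that are already directed (\cite{pentagon}, \cite{pentagon2004}, \cite{DBIBD}) or by recursion through DGDDs built earlier in Section 2. As it stands, your argument establishes nothing for any of the five values until the actual block sets are produced and checked.
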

\begin{proof}
{For $v=25$, develop the following base blocks $(-,+1\ {\rm mod}\ 12)$ to obtain a super-simple $2$-$(25,5,1)$DD with $f\geq\frac{1}{2}$.
$$
\begin{array}{l}
((1 , 1),\ (0 , 0),\ (0 , 3),\ (0 , 1),\ (0 , 7)),\ \ \ \ \ \ ((0 , 3),\ (0 , 0),\ (1 , 0),\ (1 , 8),\ (0 , 5))\\
((1 , 9),\ (1 , 11),\ (0 , 0),\ (1 , 3),\ (1 , 10)),\ \ \ ((1 , 0),\ (0 , 8),\ (0 , 4),\ (1 , 10),\ (1 , 3))\\
((0 , 0),\ (1 , 4),\ \infty,\ (1 , 1),\ (0 , 11)).\\
\end{array}
$$
For $v=45$, develop the following base blocks under the groups generated by $(0,\dots,21)\ (22,\dots,43)$ to construct a super-simple $2$-$(45,5,1)$DD with $f\geq\frac{1}{2}$, (see $\cite{pentagon2004}$).
$$
\begin{array}{l}
(0,\ 12,\ 25,\ 4,\ 41),\ \ \ \ (15,\ 37,\ 14,\ 12,\ 0)\\
(35,\ 26,\ 0,\ 31,\ 30),\ \ (24,\ 4,\ 22,\ 30,\ 31)\\
(0,\ 13,\ 16,\ 33,\ 9),\ \ \ \ (9,\ 33,\ 43,\ 23,\ 11)\\
(43,\ 32,\ 3,\ 35,\ 4)\\
(36,\ 33,\ 0,\ 17,\ 6),\ \ \ \ (17,\ 0,\ 23,\ 38,\ \infty)\\
(37,\ 34,\ 1,\ 18,\ 7),\ \ \ \ (\infty,\ 18,\ 1,\ 24,\ 39).\\
\end{array}
$$
The last two rows above is developed by $+2$ under the groups $(0,\dots,21)\ (22,\dots,43)$ and contain 22 disjoint directed trades of volume 2, the fourth row and the first three rows above contain 11 and 66 disjoint directed trades of volume 2, respectively.

For $v=65$, develop the following base blocks under the groups generated by $(0,\dots,31)\ (32,\dots,63)$ to construct
a super-simple $2$-$(65,5,1)$DD with $f\geq\frac{1}{2}$, (see $\cite{pentagon2004}$).
$$
\begin{array}{l}
(23,\ 12,\ 50,\ 0,\ 10),\ \ \ \ (10,\ 0,\ 25,\ 4,\ 51)\\
(29,\ 14,\ 57,\ 45,\ 0),\ \ \ \ (24,\ 16,\ 0,\ 45,\ 46)\\
(1,\ 30,\ 0,\ 34,\ 40),\ \ \ \ \ (48,\ 47,\ 34,\ 0,\ 44)\\
(0,\ 56,\ 35,\ 49,\ 14),\ \ \ \ (33,\ 56,\ 0,\ 37,\ 28)\\
(57,\ 47,\ 32,\ 59,\ 21),\ \ \ (61,\ 38,\ 9,\ 59,\ 32)\\
(56,\ 32,\ 48,\ 2,\ 25)\\
(\infty,\ 50,\ 33,\ 6,\ 1),\ \ \ \ \ \ (26,\ 1,\ 6,\ 58,\ 7)\\
(49,\ 32,\ 5,\ 0,\ \infty),\ \ \ \ \ \ (25,\ 0,\ 5,\ 57,\ 6).\\
\end{array}
$$
The last two rows above is developed by $+2$ under the groups $(0,\dots,31)\ (32,\dots,63)$ and contain 32 disjoint directed trades of volume 2, the sixth row and the first five rows above contain 16 and 160 disjoint directed trades of volume 2, respectively.

For $v=85$, develop the following base blocks $(+2\ {\rm mod}\ 74)$ and construct a $2-(11,5,1)DD$ on points $\infty_0,\dots,\infty_{10}$ to obtain a super-simple $2$-$(85,5,1)$DD with $f\geq\frac{1}{2}$, (see $\cite{pentagon}$).
$$
\begin{array}{l}
(0,\ 10,\ 59,\ 18,\ 61),\ \ \ (28,\ 1,\ 53,\ 10,\ 0)\\
(5,\ 1,\ 59,\ 69,\ 31),\ \ \ \ \ (59,\ 1,\ 63,\ 29,\ 4)\\
(21,\ 0,\ 69,\ 7,\ 29),\ \ \ \ \ (0,\ 21,\ \infty_0,\ 3,\ 72)\\
(0,\ 9,\ \infty_1,\ 41,\ 14),\ \ \ \ (9,\ 0,\ \infty_2,\ 23,\ 20)\\
(47,\ 32,\ \infty_3,\ 0,\ 67),\ \ (67,\ 0,\ \infty_4,\ 65,\ 48)\\
(0,\ 52,\ 36,\ 12,\ 6),\ \ \ \ \ (36,\ 52,\ 16,\ 2,\ 35),\ \ \ \ (11,\ 52,\ 0,\ 4,\ 17) \\
(13,\ 42,\ \infty_5,\ 0,\ 31)\\
(39,\ 0,\ \infty_6,\ 62,\ 15)\\
(0,\ 37,\ \infty_7,\ 1,\ 2)\\
(4,\ 57,\ \infty_8,\ 49,\ 0)\\
(37,\ 8,\ \infty_9,\ 0,\ 5)\\
(31,\ 44,\ \infty_{10},\ 0,\ 55).\\
\end{array}
$$
This super-simple $2$-$(85,5,1)$DD has 185 disjoint directed trades of volume 2 in the first five rows above, the sixth row contains a cyclical trade of volume 111 and any of the last six rows above is a cyclical trade of volume 37. So this design has $f\geq\frac{185+56+6*19+2}{714}\geq\frac{1}{2}$.

To construct a super-simple $2$-$(105,5,1)$DD use a TD$(5,5)$, add a
point $y$ to the groups then delete a different point so as to form
a $\{5,6\}$-GDD of type $4^{5} 5$. Give weight 4 to the points in
all groups. This yields a super-simple DGDD of type $(16^{5} 20)$.
Finally  add five new points to groups, then on each group of size
16 together with the five new points construct a super-simple DGDD
of type $(1^{16}\ 5^1)$ and on the last group together with the five
new points construct a super-simple $2$-$(25,5,1)$DD. This design
has $f\geq\frac{1}{2}$.}
\end{proof}
\begin{theorem}
For all $v\equiv5\ ({\rm mod}\ 20);\ v>5$ there exists a super-simple $2$-$(v,5,1)$DD with $f\geq\frac{1}{2}$.
\end{theorem}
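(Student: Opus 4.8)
The goal is to prove that for all $v\equiv5\ ({\rm mod}\ 20)$ with $v>5$, there exists a super-simple $2$-$(v,5,1)$DD with $f\geq\frac{1}{2}$. Writing $v=20m+5$, the small cases $v\in\{25,45,65,85,105\}$ (that is, $m\leq 5$) are already settled by the previous Lemma, so the plan is to handle the infinite tail $m\geq 6$ by a recursive construction feeding into Lemma 3.1 (or Lemma 3.2), exactly as was done for the $v\equiv1\ ({\rm mod}\ 10)$ case in Theorem 3.7.

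The main strategy is to realize each admissible $v$ as $\alpha\sum_{i}g_iu_i+1$ for a suitable ingredient GDD and weight $\alpha$, then invoke Lemma 3.1. The most uniform route is to take $\alpha=5$: since $v=20m+5=5(4m+1)+1\cdot 0$... more precisely $v-1=20m+4$, I want $\alpha\sum g_iu_i=20m+4$. First I would use Lemma 3.4, which guarantees a $\{5,6\}$-GDD of type $5^{4n+1}x^1$ with $x\leq 5n$ for all $n$ except $n\in\{2,11,17,23,32\}$; giving weight $\alpha=5$ to every point and applying Lemma 3.1 with the super-simple DGDDs of type $(5^5)$ and $(5^6)$ already constructed in Section 2, together with the super-simple $2$-$(5\cdot5+1,5,1)=2$-$(26,\dots)$ — wait, here one needs super-simple $2$-$(\alpha g_i+1,5,1)$DDs for $g_i\in\{5,x\}$, i.e.\ of orders $26$ and $5x+1$. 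Since $5x+1\equiv1\ ({\rm mod}\ 10)$ or $\equiv1,5$ appropriately, these are supplied by Theorem 3.7 and Lemma 3.6 provided they have $f\geq\frac12$. Thus the bulk of the spectrum follows by choosing $x$ so that $5(4n+1)+5x+\text{(nothing)}=v-1$, i.e.\ $x$ is determined by $v$, and checking $x\leq 5n$ and $x$ admissible.

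Next I would dispatch the finitely many exceptional $n\in\{2,11,17,23,32\}$ of Lemma 3.4, plus any small $v$ not reachable because the required ingredient $2$-$(5x+1,5,1)$DD is itself an open or excluded case (e.g.\ orders forcing $v=11$ or $v=91$-type obstructions, or $5x+1=5,15$). For these I would give explicit direct constructions or alternate recursive ones: use Lemma 3.3 ($\{5,6\}$-GDD of type $n^5x^1$) with an appropriate weight $\alpha\in\{4,6,10\}$ drawing on the super-simple DGDDs of types $(4^k)$, $(6^k)$, $(10^k)$ and the seed designs of orders $21,31,41,25,45,65$ from Lemmas 3.6 and 3.8; alternatively start from a $5$-GDD of type $5^5$ or a TD$(5,q)$ via Lemma 3.5 and apply Lemma 3.2 with a $5$-GDD of type $\alpha^5$. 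Each exceptional value should reduce to a combination of ingredients already built in Sections 2–3, mirroring the case-by-case bullets of Theorem 3.7.

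The main obstacle is the bookkeeping for the few values of $v$ where the natural recursion would demand a super-simple $2$-$(w,5,1)$DD with $w\in\{5,15\}$ (nonexistent) or $w\in\{11,91\}$ (the possible exceptions flagged in the abstract), or where Lemma 3.4's exceptions coincide with a scarcity of usable GDD types; these must be routed around by a different decomposition so that every filled group has admissible, already-constructed order with $f\geq\frac12$. Verifying that the weighting always lands on ingredient designs that genuinely satisfy $f\geq\frac12$ — and that no step secretly requires an excluded seed — is where the real care lies, but since every needed super-simple DGDD of type $(\alpha^k)$ and every small seed design has been exhibited with $f\geq\frac12$ in the preceding material, the constant $c=\frac12$ propagates through Lemma 3.1 (and Lemma 3.2) by the two ``in particular'' remarks, yielding the claim for all $v\equiv5\ ({\rm mod}\ 20)$, $v>5$.
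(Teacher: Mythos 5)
Your central construction does not work, and the failure is arithmetic, not bookkeeping. You propose to give weight $\alpha=5$ to a $\{5,6\}$-GDD of type $5^{4n+1}x^{1}$ and apply Lemma 3.1, seeking $5\sum g_iu_i = v-1 = 20m+4$. But $20m+4\equiv 4\ ({\rm mod}\ 5)$, so it is never divisible by $5$: Lemma 3.1 with $\alpha=5$ can only produce orders $v\equiv 1\ ({\rm mod}\ 5)$, whereas $v\equiv 5\ ({\rm mod}\ 20)$ forces $v\equiv 0\ ({\rm mod}\ 5)$. The same error surfaces at the ingredient level, and you even brush against it mid-proof: filling a group of size $5$ after weight $5$ requires a super-simple $2$-$(26,5,1)$DD, and $26\equiv 6\ ({\rm mod}\ 10)$ violates the necessary condition $v\equiv 1,5\ ({\rm mod}\ 10)$, so no such design exists; your parenthetical claim that these orders are ``supplied by Theorem 3.7 and Lemma 3.6'' is false for order $26$. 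A third problem: blocks of size $6$ under weight $5$ would need a super-simple DGDD of type $(5^6)$, which Section 2 never constructs (only $(5^5)$, $(5^7)$, $(5^9)$ appear, and indeed no such ingredient is listed there).

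The paper's proof avoids all of this by giving weight $\alpha=4$ to the same Lemma 3.4 GDD: then $4\bigl(5(4n+1)+x\bigr)+1 = 80n+4x+21$, the DGDD ingredients are the types $(4^5)$ and $(4^6)$ actually built in Section 2, groups of size $5$ become super-simple $2$-$(21,5,1)$DDs, and the group of size $x$ with $x\equiv 1\ ({\rm mod}\ 5)$ becomes a super-simple $2$-$(4x+1,5,1)$DD with $4x+1\equiv 5\ ({\rm mod}\ 20)$, i.e.\ an induction within the same residue class anchored by Lemma 3.7. The exceptional $n\in\{11,17,23,32\}$ are then absorbed by choosing $t\in\{10,16,22,31\}$ with $x\geq 31$, leaving the finite list $\{125,\dots,265,325,345,425\}$, each settled by a specific decomposition (e.g.\ $(5^5)$ weighted by $5$ for $v=125$; $(4^{10})$ weighted by $4$ plus the $(1^{16}\,5^1)$ filler for $v=165$; $(2^6)$ over a TD$(6,17)$ for $v=205$; $(5^9)$ and $(5^7)$ with weights $5$ and $7$ for $v=225$ and $245$). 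Your proposal defers exactly these residual cases to unspecified ``explicit direct constructions,'' so even after repairing the weight to $\alpha=4$ the argument would remain a sketch; as written, with $\alpha=5$, it proves nothing in this congruence class.
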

\begin{proof}
{For $v=25,\ 45,\ 65,\ 85$, see the previous Lemma. In Lemma 3.4 it is shown that for all $n$ except $n\in \{2,11,17,23,32\}$ there exists a $\{5,6\}$-GDD of type $5^{4n+1}x^{1}$ where $x\leq 5n;\ x\equiv1\ ({\rm mod}\ 5)$. Give weight 4 to all points in the groups then using the super-simple DGDDs of type $(4^{5})$ and of type $(4^{6})$, the super-simple $2$-$(21,5,1)$DD, the super-simple $2$-$(4x+1,5,1)$DD we obtain a super-simple $2$-$(80n+4x+21,5,1)$DD with $f\geq\frac{1}{2}$. For $n\in \{11,17,23,32\}$ we can use the $\{5,6\}$-GDD of type $5^{4t+1}x^{1}$ that $t\in \{10,16,22,31\}$ and $x\geq 31;\ x\equiv1\ ({\rm mod}\ 5)$. Eventually it remains the values $\{125,\dots,265,325,345,425\}$.\\
\noindent $\bullet\ \ \ v=125$: there exists a super-simple DGDD of type $(5^{5})$. Give weight 5 to the points in all groups of size 5 and use a GDD of type $5^5$. This yields a super-simple DGDD of type $(25^{5})$with $f\geq\frac{1}{2}$. It's sufficient to construct a super-simple $2$-$(25,5,1)$DD with $f\geq\frac{1}{2}$ on the points of each group.\\
\noindent $\bullet\ \ \ v\in\{145,185,265\}$: there exist $\{5,6\}$-GDDs of type $6^6,\ 5^{8} 6$ and $11^6$ $\cite{pentagon2004}$. Use Lemma 3.1 with $\alpha=4$.\\
\noindent $\bullet\ \ \ v=165$: there exists a super-simple DGDD of type $(4^{10})$. Give weight 4 to the points in all groups of size 4 and use a GDD of type $4^5$. This yields a super-simple DGDD of type $(16^{10})$ with $f\geq\frac{1}{2}$. Now add five new points to groups, then on each of 9 of the 10 groups together with the five new points construct a super-simple DGDD of type $(1^{16}\ 5^1)$ and on the last group together with the five new points construct a super-simple $2$-$(21,5,1)$DD.\\
\noindent $\bullet\ \ \ v=205$: there exists a TD$(6,17)$ $\cite{PBD}$. Give weight 2 to the points in all groups  then using the super-simple DGDD of type $(2^6)$ and a super-simple $2$-$(35,5,1)$DD with $f\geq\frac{1}{2}$ (refer to $v\equiv15\ ({\rm mod}\ 20)$) we obtain a super-simple $2$-$(205,5,1)$DD with $f\geq\frac{1}{2}$.\\
\noindent $\bullet\ \ \ v=225$: there exists a super-simple DGDD of type $(5^9)$. Give weight 5 to the points in all groups then replace any groups by a super-simple $2$-$(25,5,1)$DD and any blocks by a GDD of type $5^5$.\\
\noindent $\bullet\ \ \ v=245$: there exists a super-simple DGDD of type $(5^7)$. Give weight 7 to the points in all groups then replace any groups by a super-simple $2$-$(35,5,1)$DD and any blocks by a GDD of type $7^5$.\\
\noindent $\bullet\ \ \ v\in\{325,345,425\}$: in Lemma 3.3 it is shown that  there exist $\{5,6\}$-GDDs of types $15^{5} 6,\ 15^{5} 11$ and $20^{5}6$. Now use Lemma 3.1 with $\alpha=4$. }
\end{proof}
\Large{$v\equiv15\ ({\rm mod}\ 20)$}
\normalsize
\begin{lemma}
For all  $v\in\{35,55,75,95,115,135\}$ there exists a super-simple $2$-$(v,5,1)$DD with $f\geq\frac{1}{2}$.
\end{lemma}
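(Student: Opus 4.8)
The plan is to dispose of the six small values $v\in\{35,55,75,95,115,135\}$ one at a time, mirroring the direct-construction strategy already used in Lemmas 3.6 and 3.8. For each $v$ I would exhibit an explicit point set --- either a cyclic group $Z_v$, a product $\{0,1,2,3,4\}\times Z_m$, or such a set augmented with a few points at infinity (which is what the congruence $v\equiv 5\ ({\rm mod}\ 10)$ forces, since a pure $(+1\ {\rm mod}\ v)$ development over $Z_v$ does not yield an integer number of base blocks here) --- together with a short list of base blocks and a development rule such as $(+1\ {\rm mod}\ v)$ or $(-,+1\ {\rm mod}\ m)$ developing only the second coordinate. Developing the base blocks under the prescribed automorphism then generates the full block collection $\mathcal{B}$.

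For each construction two finite verifications are needed, and because the development is automorphic both are carried out on the base blocks alone (in practice by computer). First, that $(V,\mathcal{B})$ is genuinely a $2$-$(v,5,1)$DD: every ordered pair of distinct points must be covered exactly once, which for a cyclic development reduces to checking that the directed base-block differences hit each nonzero group element the right number of times, and for a product-or-infinity construction that the pure and mixed differences together do so. Second, that the design is super-simple, i.e. that no two blocks of the underlying $2$-$(v,5,2)$-BIBD meet in three or more points; this I would read off from the difference patterns of the base blocks.

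To obtain $f\geq\frac12$ I would, for each design, cover $\mathcal{B}$ by mutually disjoint directed trades and invoke Proposition 1.1: a defining set must contain a block of every trade it contains. Where the base blocks occur in reversible pairs I would read off disjoint directed trades of volume $2$, each forcing $1$ of its $2$ blocks and hence contributing a local ratio of exactly $\frac12$; where a row of base blocks closes up cyclically I would identify a cyclical trade of volume $s$, so that any defining set must contain at least $[\frac{s+1}{2}]$ of its $s$ blocks. Summing the forced blocks over all the trades and dividing by $|\mathcal{B}|$ then yields $f\geq\frac12$, exactly as in the earlier lemmas.

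For the larger values I expect to fall back on the recursive machinery rather than search for ad hoc base blocks: take a suitable $\{5,6\}$-GDD or transversal design from Lemmas 3.3--3.5, assign an appropriate weight $\alpha$, replace blocks by the super-simple DGDDs of type $(\alpha^5)$ and $(\alpha^6)$ built in Section 2, and fill the groups with already-constructed small super-simple designs such as the $2$-$(21,5,1)$, $2$-$(25,5,1)$ and $2$-$(35,5,1)$DDs, the remarks following Lemmas 3.1 and 3.2 guaranteeing that the bound $f\geq\frac12$ is inherited. The main obstacle is the direct part: producing base blocks that \emph{simultaneously} realize exact pair-coverage, respect super-simplicity (distinct underlying differences, so that any two blocks overlap in at most two points), and still decompose into enough disjoint trades to force at least half the blocks. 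Balancing these three competing constraints is the delicate combinatorial search underlying each explicit example.
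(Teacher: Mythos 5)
Your outline correctly identifies the paper's strategy (explicit base blocks developed under a prescribed automorphism, verification of pair coverage and super-simplicity on the base blocks, then a cover of the block set by disjoint directed trades and an appeal to Proposition 1.1), but as a proof of the lemma it has a genuine gap: it never produces the constructions. The lemma is an existence statement for six sporadic orders, and its entire mathematical content in the paper is the explicit data. For $v=35,55,75$ the paper lists short block sets developed under two-orbit or half-orbit rules (e.g.\ groups generated by $(1,\dots,17)(18,\dots,34)$ with one $\infty$, or $(+2\ {\rm mod}\ 54)$); for $v=95,115,135$ the constructions are considerably more delicate: mixed developments $(+6\ {\rm mod}\ 84)$, $(+2\ {\rm mod}\ 104)$, $(+10\ {\rm mod}\ 114)$ with prescribed partial orbit lengths, systematic relabelling rules for the infinity points, \emph{hand-modified} developed blocks (certain 5-tuples must be reordered after development to restore coverage), and a $2$-$(11,5,1)$DD or $2$-$(21,5,1)$DD filled on the infinity points. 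Asserting that a computer search ``would'' find blocks simultaneously realizing exact coverage, super-simplicity and a trade decomposition is precisely the claim at issue, and it is not automatic: at the nearby admissible order $v=15$ no design exists at all, so existence for these small orders cannot be waved through.

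Your proposed fallback to the recursive machinery for the ``larger'' of the six values is also not available: these orders are singled out exactly because the recursion of the subsequent theorem cannot reach them, and worse, that recursion consumes them as ingredients (the super-simple $2$-$(35,5,1)$DD fills groups in the $v=195,215,255,355$ constructions and the $2$-$(95,5,1)$DD in $v=515$), so invoking it here risks circularity. Two further points your sketch glosses over and which a complete proof must handle: first, the trade accounting for $v=95,115,135$ is not a uniform local ratio of $\frac12$ --- the filled $2$-$(11,5,1)$DD forces only $2$ of its $11$ blocks, and several orbits form cyclical trades of odd volume $s$ forcing $[\frac{s+1}{2}]$ blocks, so one must sum the forced blocks over all trades and check the aggregate still reaches half (as the paper does explicitly, e.g.\ $f\geq\frac{185+56+6\cdot 19+2}{714}$ in the analogous $v=85$ case); second, the sub-design placed on the infinity points must be chosen so that the \emph{global} underlying $2$-$(v,5,2)$ design remains super-simple, which is a constraint between the filled blocks and the developed ones, not a property checkable on either part alone.
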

\begin{proof}
{For $v=35$, develop the following base blocks under the groups generated by $(1,\dots,17)\\ (18,\dots,34)$ to construct a super-simple $2$-$(35,5,1)$DD with $f\geq\frac{1}{2}$, (see $\cite{pentagon2004}$).
$$
\begin{array}{l}
(1,\ 9,\ 4,\ 6,\ 2),\ \ \ \ \ \ \ \ (33,\ 27,\ 32,\ 9,\ 1)\\
(6,\ 30,\ 23,\ 10,\ 19),\ \ (9,\ 32,\ 3,\ 23,\ 30)\\
(1,\ 34,\ 26,\ 29,\ 7),\ \ \ \ (28,\ 17,\ 32,\ 7,\ 29)\\
(18,\ 2,\ \infty,\ 1,\ 20).\\
\end{array}
$$

For $v=55$, develop the following base blocks $(+2\ {\rm mod}\ 54)$ to construct a super-simple $2$-$(55,5,1)$DD with $f\geq\frac{1}{2}$, (see $\cite{pentagon}$).
$$
\begin{array}{l}
(0,\ 28,\ 7,\ 16,\ 8),\ \ \ \ \ (3,\ 25,\ 10,\ 7,\ 28)\\
(0,\ 53,\ 27,\ 4,\ 17),\ \ \ \ (11,\ 7,\ 17,\ 4,\ 34)\\
(48,\ 34,\ 39,\ 9,\ 0),\ \ \ \ (37,\ 9,\ 39,\ 25,\ 17)\\
(30,\ 40,\ 0,\ 11,\ 23),\ \ (18,\ 40,\ 30,\ 12,\ 7)\\
(52,\ 17,\ 0,\ 1,\ 50),\ \ \ \ (23,\ 17,\ 52,\ 37,\ 36)\\
(11,\ 22,\ \infty,\ 0,\ 9).\\
\end{array}
$$

For $v=75$, develop the following base blocks $(+2\ {\rm mod}\ 74)$ to construct a super-simple $2$-$(75,5,1)$DD with $f\geq\frac{1}{2}$, (see $\cite{pentagon}$).
$$
\begin{array}{l}
(6,\ 0,\ 47,\ 44,\ 3),\ \ \ \ \ \ (40,\ 26,\ 65,\ 3,\ 44)\\
(20,\ 13,\ 32,\ 0,\ 40),\ \ \ (72,\ 27,\ 32,\ 13,\ 22)\\
(17,\ 42,\ 0,\ 31,\ 48),\ \ \ (37,\ 31,\ 0,\ 19,\ 70)\\
(29,\ 64,\ 63,\ 52,\ 0),\ \ \ (68,\ 53,\ 52,\ 63,\ 44)\\
(26,\ 5,\ 43,\ 69,\ 1),\ \ \ \ (5,\ 26,\ 52,\ 8,\ 54)\\
(25,\ 3,\ 27,\ 1,\ 43),\ \ \ \ (54,\ 16,\ 3,\ 25,\ 32)\\
(60,\ 7,\ 61,\ 0,\ 65),\ \ \ \ (53,\ 54,\ 15,\ 61,\ 7)\\
(2,\ 15,\ \infty\ 59,\ 0).\\
\end{array}
$$

For $v=95$, develop the following base blocks as follows, (see $\cite{pentagon}$).\\
\noindent The base blocks in the first three rows are developed by  $(+6\ {\rm mod}\ 84)$ and each of them forms 7 blocks. In the base blocks $(49,\ 7,\ 6,\ 48,\ \infty_2)$ and $(51,\ 9,\ 8,\ 50,\ \infty_0)$, 5-tuples $(1,\ 43,\ 42,\ 0,\ \infty_2)$ and $(3,\ 45,\ 44,\ 2,\ \infty_0)$ must be changed to $(1,\ 43,\ 0,\ 42,\ \infty_2)$ and  $(3,\ 45,\ 2,\ 44,\ \infty_0)$, respectively. The other base blocks are developed by $(+2\ {\rm mod}\ 84)$. $\infty_0$ in the 4th row is replaced by $\infty_x$ when adding any values congruent to $2x\ ({\rm mod}\ 6)$. Every base block in 14th row forms 14 blocks and also blocks in this row form a cyclical trade of volume 3. Eventually construct a $2$-$(11,5,1)$DD on points $\infty_0,\dots,\infty_{10}$.
$$
\begin{array}{l}
(5,\ 47,\ \infty_0,\ 42,\ 0),\ \ \ \ (47,\ 5,\ 4,\ 46,\ \infty_1)\\
(7,\ 49,\ \infty_1,\ 44,\ 2),\ \ \ \ (49,\ 7,\ 6,\ 48,\ \infty_2)\\
(9,\ 51,\ \infty_2,\ 46,\ 4),\ \ \ \ (51,\ 9,\ 8,\ 50,\ \infty_0)\\
(\infty_0,\ 34,\ 79,\ 62,\ 3),\ \ (30,\ 67,\ 34,\ \infty_0,\ 47)\\
(83,\ 69,\ 18,\ 81,\ 13),\ \ (31,\ 13,\ 81,\ 60,\ 23)\\
(1,\ 47,\ 70,\ 2,\ 21),\ \ \ \ \ (21,\ 2,\ 53,\ 46,\ 57)\\
(26,\ 56,\ 20,\ 44,\ 59),\ \ (74,\ 24,\ 67,\ 56,\ 26)\\
(32,\ 46,\ 0,\ 8,\ 58),\ \ \ \ \ (8,\ 0,\ 64,\ 9,\ 70)\\
(61,\ 78,\ 25,\ 1,\ 63),\ \ \ \ (33,\ 67,\ 63,\ 1,\ 36)\\
(81,\ 18,\ \infty_3,\ 2,\ 75),\ \ \ (4,\ 31,\ \infty_4,\ 75,\ 2)\\
(13,\ 20,\ \infty_5,\ 10,\ 1),\ \ \ (1,\ 10,\ \infty_6,\ 50,\ 7)\\
(74,\ 57,\ \infty_7,\ 13,\ 0)\\
(0,\ 5,\ \infty_8,\ 59,\ 20)\\
(12,\ 61,\ \infty_9,\ 33,\ 8),\ \ (40,\ 5,\ \infty_9,\ 61,\ 36),\ \ (68,\ 33,\ \infty_9,\ 5,\ 64)\\
(71,\ 62,\ \infty_{10},\ 0,\ 61).\\
\end{array}
$$

For $v=115$, develop the following base blocks as follows, (see $\cite{pentagon}$).\\
\noindent First in the following blocks  $\infty_0$  is replaced by $\infty_1$ when adding 2 $({\rm mod}\ 104)$.
$$
\begin{array}{l}
(9,\ 61,\ 56,\ 4,\ \infty_0),\ \ \ \ \ \ (61,\ 9,\ 36,\ 88,\ \infty_1)\\
(13,\ 65,\ 60,\ 8,\ \infty_0),\ \ \ \ (65,\ 13,\ 40,\ 92,\ \infty_1)\\
(17,\ 69,\ 64,\ 12,\ \infty_0),\ \ \ (69,\ 17,\ 44,\ 96,\ \infty_1)\\
(21,\ 73,\ 68,\ 16,\ \infty_0),\ \ \ (73,\ 21,\ 48,\ 100,\ \infty_1)\\
(25,\ 77,\ 72,\ 20,\ \infty_0),\ \ \ (77,\ 25,\ 52,\ 0,\ \infty_1)\\
(29,\ 81,\ 76,\ 24,\ \infty_0),\ \ \ (81,\ 29,\ 4,\ 56,\ \infty_1)\\
(33,\ 85,\ 80,\ 28,\ \infty_0),\ \ \ (85,\ 33,\ 8,\ 60,\ \infty_1)\\
(37,\ 89,\ 84,\ 32,\ \infty_0),\ \ \ (89,\ 37,\ 12,\ 64,\ \infty_1)\\
(41,\ 93,\ 88,\ 36,\ \infty_0),\ \ \ (93,\ 41,\ 16,\ 68,\ \infty_1)\\
(45,\ 97,\ 92,\ 40,\ \infty_0),\ \ \ (97,\ 45,\ 20,\ 72,\ \infty_1)\\
(49,\ 101,\ 96,\ 44,\ \infty_0),\ \ (101,\ 49,\ 24,\ 76,\ \infty_1)\\
(53,\ 1,\ 100,\ 48,\ \infty_0),\ \ \ (1,\ 53,\ 28,\ 80,\ \infty_1)\\
(57,\ 5,\ 0,\ 52,\ \infty_0),\ \ \ \ \ \ (5,\ 57,\ 32,\ 84,\ \infty_1)\\
\end{array}
$$
Then develop the following base blocks $(+2\ {\rm mod}\ 104)$, in the first row  $\infty_0$  is replaced by $\infty_1$ when adding any values congruent to $2\ ({\rm mod}\ 4)$. The base block in the 11th row contains four disjoint cyclical  trades of volume 13, and also about the base block in the 13th row.
$$
\begin{array}{l}
(\infty_0,\ 47,\ 2,\ 4,\ 41),\ \ \ (4,\ 2,\ 44,\ 26,\ 49)\\
(14,\ 27,\ 0,\ 15,\ 11),\ \ \ (8,\ 71,\ 73,\ 11,\ 15)\\
(1,\ 51,\ 71,\ 81,\ 34),\ \ \ (91,\ 88,\ 51,\ 1,\ 9)\\
(0,\ 51,\ 49,\ 29,\ 87),\ \ \ (51,\ 0,\ 91,\ 77,\ 68)\\
(90,\ 0,\ 30,\ 20,\ 99),\ \ \ (7,\ 46,\ 30,\ 0,\ 26)\\
(88,\ 73,\ 38,\ 0,\ 32),\ \ \ (54,\ 66,\ 58,\ 32,\ 0)\\
(91,\ 55,\ 0,\ 61,\ 73),\ \ \ (89,\ 94,\ \infty_2,\ 61,\ 0)\\
(0,\ 53,\ \infty_3,\ 85\ 56),\ \ \ (97,\ 98,\ 53,\ 0,\ 27)\\
(31,\ 68,\ 0\ 21\ 76),\ \ \ \ \ (21,\ 0,\ \infty_4,\ 93,\ 92)\\
(86,\ 63,\ \infty_5,\ 79,\ 0)\\
(40,\ 19,\ \infty_6,\ 11,\ 0)\\
(75,\ 0,\ \infty_7,\ 62,\ 103)\\
(0,\ 19,\ \infty_8,\ 43,\ 28)\\
(69,\ 0,\ \infty_9,\ 80,\ 31)\\
(83,\ 44,\ \infty_{10},\ 0,\ 35).\\
\end{array}
$$
Eventually construct a $2$-$(11,5,1)$DD on points $\infty_0,\dots,\infty_{10}$.

For $v=135$, develop the following base blocks as follows, (see $\cite{pentagon}$).
$$
\begin{array}{l}
(\infty_{1},\ 4,\ 89,\ 18,\ 69),\ \ \ \ \ \ (29,\ 21,\ 18,\ 89,\ 53)\ \ \ \ \ (+6\ {\rm mod}\ 114)\ {\rm form}\ 6 \ {\rm blocks}\\
(\infty_{1},\ 26,\ 111,\ 40,\ 91),\ \ \ (51,\ 43,\ 40,\ 111,\ 75)\ \ \ (+6\ {\rm mod}\ 114)\ {\rm form}\ 8 \ {\rm blocks}    \\
(\infty_{1},\ 60,\ 31,\ 74,\ 11),\ \ \ \ (85,\ 77,\ 74,\ 31,\ 109)\ \ \ (+6\ {\rm mod}\ 114)\ {\rm form}\ 8 \ {\rm blocks}\\
(2,\ 87,\ 16,\ 67,\ \infty_{1}),\ \ \ \ \ \ (27,\ 19,\ 16,\ 87,\ 51)\ \ \ \ (+6\ {\rm mod}\ 114)\ {\rm form}\ 4 \ {\rm blocks}\\
(12,\ 97,\ 26,\ 77,\ \infty_{1}),\ \ \ \ (37,\ 29,\ 26,\ 97,\ 61)\ \ \ \ (+6\ {\rm mod}\ 114)\ {\rm form}\ 8 \ {\rm blocks}\\
(46,\ 17,\ 60,\ 111,\ \infty_{1}),\ \ \ (71,\ 63,\ 60,\ 17,\ 95)\ \ \ \ (+6\ {\rm mod}\ 114)\ {\rm form}\ 8 \ {\rm blocks}\\
(80,\ 51,\ 94,\ 31,\ \infty_{1}),\ \ \ \ (105,\ 97,\ 94,\ 51,\ 15)\ \ \ (+6\ {\rm mod}\ 114)\ {\rm form}\ 6 \ {\rm blocks}\\
(0,\ \infty_{0},\ 11,\ 37,\ 98),\ \ \ \ \ (37,\ 11,\ 33,\ 75,\ 95)\ \ \ \ (+10\ {\rm mod}\ 114)\ {\rm form}\ 11 \ {\rm blocks}\\
(112,\ \infty_{0},\ 9,\ 35,\ 96),\ \ \ (35,\ 9,\ 31,\ 73,\ 93)\ \ \ \ \ \ (+10\ {\rm mod}\ 114)\ {\rm form}\ 9 \ {\rm blocks}\\
(110,\ \infty_{0},\ 7,\ 33,\ 94),\ \ \ (33,\ 7,\ 29,\ 71,\ 91)\ \ \ \ \ \ (+10\ {\rm mod}\ 114)\ {\rm form}\ 3 \ {\rm blocks}\\
(88,\ 99,\ 11,\ \infty_{0},\ 72),\ \ \ (11,\ 99,\ 7,\ 49,\ 69)\ \ \ \ \ \ (+10\ {\rm mod}\ 114)\ {\rm form}\ 11 \ {\rm blocks}\\
(86,\ 97,\ 9,\ \infty_{0},\ 70),\ \ \ \ (9,\ 97,\ 5,\ 47,\ 67)\ \ \ \ \ \ \ \ (+10\ {\rm mod}\ 114)\ {\rm form}\ 9 \ {\rm blocks}\\
(62,\ \infty_{0},\ 73,\ 99,\ 46),\ \ \ (99,\ 73,\ 95,\ 23,\ 43)\ \ \ \ (+10\ {\rm mod}\ 114)\ {\rm form}\ 5 \ {\rm blocks}\\
(36,\ 47,\ 73,\ \infty_{0},\ 20),\ \ \ (73,\ 47,\ 69,\ 111,\ 17)\ \ \ (+10\ {\rm mod}\ 114)\ {\rm form}\ 5 \ {\rm blocks}\\
(84,\ 95,\ 7,\ \infty_{0},\ 68),\ \ \ \ \ (7,\ 95,\ 3,\ 45,\ 65)\ \ \ \ \ \ \ (+10\ {\rm mod}\ 114)\ {\rm form}\ 3 \ {\rm blocks}\\
\end{array}
$$
$$
\begin{array}{l}
(61,\ 0,\ \infty_{4},\ 64,\ 63),\ \ \ \ (0,\ 61,\ \infty_{3},\ 112,\ 101),\ \ \ (101,\ 112,\ \infty_{2},\ 50,\ 57)\\
(103,\ 101,\ 17,\ 1,\ 35),\ \ \ (25,\ 19,\ 35,\ 1,\ 48)\\
(4,\ 96,\ 28,\ 0,\ 60),\ \ \ \ \ \ \ (112,\ 74,\ 104,\ 60,\ 0)\\
(10,\ 60,\ 1,\ 82,\ 76),\ \ \ \ \ \ (33,\ 82,\ 1,\ 70,\ 90)\\
(110,\ 35,\ 40,\ 41,\ 0),\ \ \ \ (0,\ 41,\ \infty_{20},\ 97,\ 42)\\
(34,\ 10,\ 59,\ 0,\ 31),\ \ \ \ \ \ (31,\ 0,\ \infty_{19},\ 48,\ 67)\\
(88,\ 105,\ \infty_{18},\ 83,\ 0),\ \ (80,\ 95,\ \infty_{17},\ 0,\ 83)\\
(5,\ 26,\ \infty_{16},\ 0,\ 79),\ \ \ \ \ (102,\ 75,\ \infty_{15},\ 79,\ 0)\\
(13,\ 46,\ \infty_{14},\ 89,\ 0),\ \ \ \ (23,\ 108,\ \infty_{13},\ 0,\ 89)\\
(15,\ 56,\ \infty_{12},\ 0,\ 23)\\
(76,\ 19,\ \infty_{11},\ 9,\ 0)\\
(32,\ 107,\ \infty_{10},\ 37,\ 0)\\
(0,\ 77,\ \infty_{9},\ 13,\ 28)\\
(87,\ 18,\ \infty_{8},\ 0,\ 27)\\
(43,\ 0,\ \infty_{7},\ 36,\ 29)\\
(1,\ 0,\ \infty_{6},\ 94,\ 73)\\
(17,\ 0,\ \infty_{5},\ 84,\ 69)\\
\end{array}
$$
Any of base blocks from 16th row to the last row is developed by  $(+2\ {\rm mod}\ 114)$.\\
\noindent In base block $(25,\ 19,\ 35,\ 1,\ 48)$, 5-tuples $(71,\ 65,\ 81,\ 47,\ 94)$, $(77,\ 71,\ 87,\ 53,\ 100)$, $(61,\ 55,\ 71,\ 37,\ 84)$ and $(55,\ 49,\ 65,\ 31,\ 78)$ must be changed to $(71,\ 81,\ 47,\ 94,\ 65)$, $(77,\ 87,\ 53,\ 100,\ 71)$, $(61,\ 55,\ 37,\ 71,\ 84)$ and $(55,\ 49,\ 31,\ 65,\ 78)$, respectively. And also in base block $(103,\ 101,\ 17,\ 1,\ 35)$, 5-tuples $(53,\ 51,\ 81,\ 65,\ 99)$, $(59,\ 57,\ 87,\ 71,\ 105)$, $(25,\ 23,\ 53,\ 37,\ 71)$ and $(19,\ 17,\ 47,\ 31,\ 65)$ must be changed to $(53,\ 51,\ 65,\ 81,\ 99)$, $(59,\ 57,\ 71,\ 87,\ 105)$, $(25,\ 23,\ 71,\ 53,\ 37)$ and $(19,\ 17,\ 65,\ 47,\ 31)$, respectively.\\
\noindent Any of the last four base blocks above contains three disjoint cyclical trades of volume 19. And base blocks  in the 16th row contain five disjoint cyclical trades of volume 15 (for example see the following cyclical trade) and 48 directed trades of volume 2.
$$
\begin{array}{l}
(65,\ 4,\ \infty_{4},\ 68,\ 67),\ \ \ \ \ \ (4,\ 65,\ \infty_{3},\
2,\ 105),\ \ \ \ \ (105,\ 2,\ \infty_{2},\ 54,\ 61), \ \ \  (43,\
54,\ \infty_{2},\ 106,\ 113),\ \\(95,\ 106,\ \infty_{2},\ 44,\
51),\ \ \ (33,\ 44,\ \infty_{2},\ 96,\ 103),\ \ (85,\ 96,\
\infty_{2},\ 34,\ 41),\ \ \ (23,\ 34,\ \infty_{2},\ 86,\ 93),\ \\
(75,\ 86,\ \infty_{2},\ 24,\ 31),\ \ \ \ (13,\ 24,\ \infty_{2},\
76,\ 83),\ \ \ \ (65,\ 76,\ \infty_{2},\ 14,\ 21),\ \ \ (3,\ 14,\
\infty_{2},\ 66,\ 73),\ \\(55,\ 66,\ \infty_{2},\ 4,\ 11),\ \ \ \
\ (68,\ 15,\ \infty_{3},\ 66,\ 55),\ \ \ \ (15,\ 68,\
\infty_{4},\ 18,\ 17)\
\end{array}
$$
Eventually It is sufficient to construct a $2$-$(21,5,1)$DD on points $\infty_0,\dots,\infty_{20}$ and to take the following blocks.
$$
\begin{array}{l}
(\infty_{1},\ 106,\ 77,\ 6,\ 57),\ \ \ \ (17,\ 9,\ 6,\ 77,\ 41)\\
(108,\ \infty_{1},\ 79,\ 8,\ 59),\ \ \ \ (19,\ 11,\ 8,\ 79,\ 43)\\
(\infty_{1},\  112,\ 83,\ 12,\ 63),\ \ (23,\ 15,\ 12,\ 83,\ 47)\\
(0,\ \infty_{1},\ 85,\ 14,\ 65),\ \ \ \ \ (25,\ 17,\ 14,\ 85,\ 49)\\
(65,\ \infty_{1},\ 94,\ 108,\ 45),\ \ (71,\ \infty_{1},\ 100,\ 0,\ 51),\ \ (6,\ 91,\ \infty_{1},\ 20,\ 71)\\
(40,\ 11,\ \infty_{1},\ 54,\ 105),\ \ (74,\ 45,\ \infty_{1},\ 88,\ 25),\ \ (26,\ 37,\ \infty_{0},\ 63,\ 10)\\
(5,\ 111,\ 108,\ 65,\ 29),\ \ \ (11,\ 3,\ 0,\ 71,\ 35),\ \ \ \ \ \ (31,\ 23,\ 20,\ 91,\ 55)\\
(65,\ 57,\ 54,\ 11,\ 89),\ \ \ \ (99,\ 91,\ 88,\ 45,\ 9),\ \ \ \ \ (63,\ 37,\ 59,\ 101,\ 7).\\
\end{array}
$$
}
\end{proof}
\begin{theorem}
For all $v\equiv15\ ({\rm mod}\ 20);\ v>15$ there exists a super-simple $2$-$(v,5,1)$DD with $f\geq\frac{1}{2}$.
\end{theorem}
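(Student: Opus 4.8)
The plan is to split the argument into base cases and a recursive engine, exactly parallel to the proofs of the two preceding Theorems. First I would dispose of the small orders $v\in\{35,55,75,95,115,135\}$ by quoting the previous Lemma (Lemma 3.8), which already supplies super-simple $2$-$(v,5,1)$DDs with $f\ge\frac12$. Everything that follows then concerns $v=20m+15$ with $m\ge7$, and the whole argument is an application of Wilson's Fundamental Construction (Lemma 3.1), so the bound $f\ge\frac12$ is automatically inherited: by the remark following Lemma 3.1, if every ingredient DGDD and every filling design has $f\ge\frac12$, so does the output, and all the super-simple DGDDs of Section 2 together with every small directed design invoked below as a filling satisfy this bound.

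For the generic step I would start from the $\{5,6\}$-GDD of type $5^{4n+1}x^1$ supplied by Lemma 3.4, give every point weight $4$, and replace its blocks by the super-simple DGDDs of type $(4^5)$ and $(4^6)$ from Section 2; this yields a super-simple DGDD of type $(20^{4n+1}(4x)^1)$. I would then adjoin three new points, fill each hole of size $20$ together with the three new points by the super-simple DGDD of type $(1^{20}3^1)$, and fill the hole of size $4x$ together with the three new points by a super-simple $2$-$(4x+3,5,1)$DD. The output is a super-simple $2$-$(80n+4x+23,5,1)$DD. Choosing $x\equiv3\ ({\rm mod}\ 5)$ forces $4x+3\equiv15\ ({\rm mod}\ 20)$, so the filling design is itself a smaller instance of the theorem: this is a genuine recursion grounded in Lemma 3.8. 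To widen the set of reachable orders I would run the same scheme with weight $6$ (using $(6^5),(6^6)$) and weight $10$ (using $(10^5),(10^6)$), and with the GDDs of type $n^5x^1$ from Lemma 3.3; for a few residual orders I would instead use the DGDD-based variant (Lemma 3.2), starting from a Section 2 super-simple DGDD and a $5$-GDD of type $\alpha^5$, as was done for $v=141,225,245$ in the previous Theorems.

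The hard part is the arithmetic coverage, not any single construction. Writing $v=80n+4x+23$ forces $x=5(m-4n)-2$ together with $3\le x\le 5n$; the admissible window for $n$ is thin for small $m$ and empty for several residues, and one must in addition steer clear of the exceptional parameters $n\in\{2,11,17,23,32\}$ of Lemma 3.4, of $n\in\{6,10,14,18,22\}$ of Lemma 3.3, and of every decomposition whose reduced order lands on a nonexistent design (notably $4x+3=15$, i.e.\ $x=3$, and the orders $5,15$ in general). After the generic reduction and its weight-$6$/weight-$10$ variants there will remain a finite but nontrivial list of stubborn orders, the analogue of the values handled one by one inside Lemma 3.8; each of these I would build directly over a cyclic group $Z_{n}$ adjoined with infinity points. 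The real labour, and the main obstacle, lies in those direct constructions: one must exhibit explicit base blocks, verify super-simplicity (any two blocks of the underlying BIBD meet in at most two points), and display enough mutually disjoint directed and cyclical trades to certify $f\ge\frac12$ — precisely the bookkeeping carried out in the long base-block tables of Lemma 3.8.
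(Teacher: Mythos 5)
Your generic engine is sound and its skeleton is the paper's: dispose of $v\in\{35,55,75,95,115,135\}$ by the preceding Lemma, then run Wilson's construction over the $\{5,6\}$-GDDs of type $5^{4n+1}x^1$ from Lemma 3.4, with $f\geq\frac{1}{2}$ inherited via the remark after Lemma 3.1. Indeed, your weight-$4$ variant (adjoin three points, fill the size-$20$ holes with the DGDD of type $(1^{20}3^1)$, fill the long hole with a $2$-$(4x+3,5,1)$DD) is exactly the device the paper itself uses for $v=215,255,355,515$, and your arithmetic $x=5(m-4n)-2$, $8\leq x\leq 5n$ is correct. But this is where the genuine gap sits: that window is empty for the whole initial stretch --- $v=155,175,195,215,235,255,275$ admit no admissible $n$ at all under your scheme, and further values such as $335$, $355$ and $435$ fail too --- and you explicitly defer all of these to direct base-block constructions over $Z_n$ with infinity points that you never exhibit. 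Announcing that one ``would'' find base blocks, verify super-simplicity and display enough disjoint trades is a plan, not a proof; for orders like $355$ this is a substantial search, and nothing in your argument guarantees it succeeds. As written, the theorem remains unproved on a concrete, nonempty set of orders.

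The paper closes precisely this gap with no new base-block searches, and the key device your proposal misses is \emph{mixed weights}. Giving the first $4n+1$ groups weight $6$ and the points of the last group weights $0$, $6$ or $8$ (using the Section 2 DGDDs of types $(6^5)$, $(6^6)$ and $(6^5 8^1)$) produces super-simple DGDDs of type $(30^{4n+1}g^1)$ for \emph{every} $g$ with $24\leq g\leq 40n$: the hole size sweeps an interval, rather than the sparse set $\{4x\}$ that your uniform weight $4$ yields. Adding one point and filling with the $2$-$(31,5,1)$DD and a $2$-$(g+1,5,1)$DD, $g+1\equiv5\ ({\rm mod}\ 20)$ --- already supplied by the residue-$5$ theorem, so no within-residue induction is needed --- covers all but a short leftover list, and even that list is dispatched recursively: type $(31^5)$ from the super-simple TD$_2(31,31)$ of \cite{pentagon} for $155$; truncated TD$(6,8)$, TD$(6,9)$, TD$(6,11)$, TD$(6,16)$ with weight $4$ for $195,215,255,355$; weight $5$ on TD$(5,11)$, TD$(7,9)$, TD$(5,15)$ for $275,315,375$; the GDDs of types $9^5 3$ and $9^5 8$ for $295,335$; type $5^{21}23^1$ for $515$; and a citation to \cite{pentagon2004} for $235$. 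So the correct completion of your argument is richer weighting and more recursion, not the unexhibited direct constructions on which your proposal ultimately rests.
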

\begin{proof}
{For $v=35,\ 55,\ 75,\ 95,\ 115,\ 135$, see the previous Lemma. In Lemma 3.4 it is shown that for all $n$ except $n\in \{2,11,17,23,32\}$ there exists a $\{5,6\}$-GDD of type $5^{4n+1}x^{1}$ where $x\leq 5n$. Give weight 6 to all points in the first $4n+1$ groups and weight 0, 6 or 8 to the points in the last group then use the super-simple DGDD of type $(6^{5} 8^{1})$, super-simple DGDD of type $(6^{5})$, super-simple DGDD of type $(6^{6})$. If we give weight 6 to all points in the first $4n+1$ groups and weight 8 to three points or weight 8 to all points in the last group of $\{5,6\}$-GDD of type $5^{x}k^{1}$ then we can form a super-simple DGDD of type $(30^{4n+1} 24^{1})$ or super-simple DGDD of type $(30^{4n+1} 40n^{1})$ respectively. Generally for all $n$ except $n\in \{2,11,17,23,32\}$ there exists a super-simple DGDD of type $(30^{4n+1} g^{1})$ where $24\leq g\leq 40n$. There exists a super-simple $2$-$(31,5,1)$DD. Therefor if there is a super-simple $2$-$(g+1,5,1)$DD where $g+1\equiv5\ ({\rm mod}\ 20)$, we can add a new point to this super-simple DGDD of type $(30^{4n+1} g^{1})$ and construct a super-simple $2$-$(120n+30+g+1,5,1)$DD.\\
\noindent It remains the values $\{155,195,\dots,395,515\}$.\\
\noindent $\bullet\ \ \ v=155$: first we form a super-simple DGDD of type $(31^5)$ as follows.

\noindent For constructing of super-simple DGDD of type $(31^5)$, we can use the following result which is obtained in $\cite{pentagon}$.

\noindent  %
\begin{lemma}~{\rm \cite{pentagon}}
If $q$ is a odd prime power, then there exists a super-simple TD$_2(q,q)$ which is the union of two TD$(q,q)$s.
\end{lemma}
\noindent Now applying Lemma 3.5 with $k=5$ and $q=31$ and using previous Lemma we can obtain a super-simple $(5,2)$-GDD of type $(31^5)$ which is the union of two $TD(5,31)$s. Now by a suitable arrangement in these two $TD(5,31)$s we  can construct a super-simple DGDDs of type $(31^5)$ with $f\geq\frac{1}{2}$. It's sufficient to construct a super-simple $2$-$(31,5,1)$DD with $f\geq\frac{1}{2}$ on the points of each group.\\
\noindent $\bullet\ \ \ v=195$: there exists a TD$(6,8)$ $\cite{PBD}$, add a point $y$ to the groups, then delete a different point so as to form a $\{6,9\}$-GDD of type $5^{8} 8$. Give weight 4 to all points in the groups of size 5 and points in the group of size 8 except added point $y$ then give weight 6 to $y$ $\cite{pentagon}$. Since $y$ is in the blocks of size 9 so we replace any of them by a super-simple DGDD of type $(4^8\ 6^1)$ with $f\geq\frac{1}{2}$ and for blocks of size 6 use a super simple DGDD of type $(4^6)$ finally use a super-simple $2$-$(21, 5, 1)$DD and a super-simple $2$-$(35,5,1)$DD.\\
\noindent $\bullet\ \ \ v=215$: like in the $v=195$  first use a TD$(6,9)$ $\cite{PBD}$ to construct a $\{6,10\}$-GDD of type $5^{9} 9$ then delete one point from the last group (not $y$) and give weight 4 to all other points. The point $y$ is in the blocks of size 10 so we replace any of them by a super-simple DGDD of type $(4^{10})$ and for blocks of size 6 use a super simple DGDD of type $(4^5)$, $(4^6)$. This yields a super-simple DGDD of type $(20^9\ 32)$, finally add three new points to groups, then on each of 9 groups of size 20 together with the three new points construct a super-simple DGDD of type $(1^{20}\ 3^1)$ and on the last group together with the three new points construct a super-simple $2$-$(35,5,1)$DD.\\
\noindent  $\bullet\ \ \ v=235$: in $\cite{pentagon2004}$ it is shown that there exists a super-simple $2$-$(235,5,1)$ design, by the same way we can form a super-simple $2$-$(235,5,1)$DD with $f\geq\frac{1}{2}$.\\
\noindent $\bullet\ \ \ v=255$: if we remove one point of a TD$(6,11)$ $\cite{PBD}$, then we can construct a $\{6,11\}$-GDD of type $5^{11} 10$. Give weight 4 to all points in the groups of size 5 and in the last group give weight 4 to 8 points and use super-simple DGDDs of type $(4^5)$, $(4^6)$ and $(4^{11})$. This yields a super-simple DGDD of type $(20^{11}\ 32)$, finally add three new points to groups, then on each of 11 groups of size 20 together with the three new points construct a super-simple DGDD of type $(1^{20}\ 3^1)$ and on the last group together with the three new points construct a super-simple $2$-$(35,5,1)$DD.\\
\noindent $\bullet\ \ \ v\in \{275,315,375\}$: there exist a TD$(5,11)$, a $TD(7,9)$ and a $TD(5,15)$ $\cite{PBD}$, give weight 5 to the points in all groups to construct a super-simple $2$-$(275,5,1)$DD, a super-simple $2$-$(315,5,1)$DD and a super-simple $2$-$(375,5,1)$DD with $f\geq\frac{1}{2}$, respectively.\\
\noindent $\bullet\ \ \ v\in \{295,335\}$: in Lemma 3.3 it is shown that there exist a $\{5,6\}$-GDD of type $9^{5} 3$ and a $\{5,6\}$-GDD of type $9^{5} 8$. Give weight 6 to the points in the first five groups and  weight 8 to the points in the last groups and add an extra point to each group to construct a super-simple $2$-$(295,5,1)$DD and a super-simple $2$-$(335,5,1)$DD with $f\geq\frac{1}{2}$, respectively.\\
\noindent $\bullet\ \ \ v=355$: if we delete one point of a TD$(6,16)$ $\cite{PBD}$, then we can construct a $\{6,16\}$-$GDD$ of type $5^{16} 15$. Give weight 4 to all points in the groups of size 5 and in the last group give weight 4 to 8 points and use super-simple DGDDs of type $(4^5)$, $(4^6)$ and $(4^{16})$. This yields a super-simple DGDD of type $(20^{16}\ 32)$, finally add three new points to groups, then on each of 16 groups of size 20 together with the three new points construct a super-simple DGDD of type $(1^{20}\ 3^1)$ and on the last group together with the three new points construct a super-simple $2$-$(35,5,1)$DD.\\
\noindent $\bullet\ \ \ v=515$: in Lemma 3.4 it is shown that there
exists a $\{5,6\}$-$GDD$ of type $5^{21}23^{1}$. Give weight 4 to
the points in all groups to construct a super-simple DGDD of type
$(20^{21}\ 92)$, then add three new points to groups, then on each
of 21 groups of size 20 together with the three new points construct
a super-simple $DGDD$ of type $(1^{20}\ 3^1)$ and on the last group
together with the three new points construct a super-simple
$2$-$(95,5,1)$DD. }
\end{proof}
In $\cite{pentagon2004}$ it is shown that there exists a
super-simple $2$-$(91,5,2)$ design, by the same way we can form a
super-simple $2$-$(91,5,1)$DD.

Thus we have proved the following theorems.
\begin{theorem}
For all $v\equiv1,5\ ({\rm mod}\ 10)$, except $v=5,15$ there exists a super-simple $2$-$(v,5,1)$DD.
\end{theorem}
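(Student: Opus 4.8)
The plan is to derive this theorem as an immediate consequence of the constructions already assembled in this section, by partitioning the admissible values $v\equiv1,5\ ({\rm mod}\ 10)$ into residue classes and quoting the appropriate existence result for each. First I would dispose of the class $v\equiv1\ ({\rm mod}\ 10)$. Lemma 3.6 produces a super-simple $2$-$(11,5,1)$DD, which handles the smallest admissible value $v=11$ lying outside the range of the general theorem. Theorem 3.1 then covers every $v\equiv1\ ({\rm mod}\ 10)$ with $v>11$, with the sole possible exception $v=91$. That exception is removed by the direct construction noted immediately above the statement, which mimics the super-simple $2$-$(91,5,2)$ design of \cite{pentagon2004} to yield a super-simple $2$-$(91,5,1)$DD. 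These three inputs exhaust the class $v\equiv1\ ({\rm mod}\ 10)$, leaving no admissible value of this form uncovered. Note that the present statement asserts only existence, with no condition on $f$, so the cases $v=11$ and $v=91$ are admissible here even though they are exactly the values where the half-size defining-set bound is not claimed.

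For $v\equiv5\ ({\rm mod}\ 10)$ I would refine the analysis modulo $20$, since the two sub-classes are handled by different constructions and since the two values excluded in the statement, $v=5$ and $v=15$, are precisely the smallest members of the respective sub-classes. Theorem 3.2 supplies a super-simple $2$-$(v,5,1)$DD for every $v\equiv5\ ({\rm mod}\ 20)$ with $v>5$, and Theorem 3.3 does the same for every $v\equiv15\ ({\rm mod}\ 20)$ with $v>15$. Since each $v\equiv5\ ({\rm mod}\ 10)$ is congruent to either $5$ or $15$ modulo $20$, and the only members not reached by these two theorems are $v=5$ and $v=15$, the class $v\equiv5\ ({\rm mod}\ 10)$ is completely settled apart from the two values excluded in the statement.

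The argument is therefore essentially a verification that the case split is exhaustive: every admissible $v$ is congruent to $1$, $5$, $11$, or $15$ modulo $20$, the residues $1$ and $11$ being absorbed by the class $v\equiv1\ ({\rm mod}\ 10)$, and each such value is covered by one of Lemma 3.6, Theorem 3.1, Theorem 3.2, or Theorem 3.3 together with the isolated $v=91$ construction. I do not anticipate any genuine difficulty, because all of the hard existence work — the direct super-simple DGDDs of Section 2 and the recursive constructions via Wilson's fundamental construction in Lemmas 3.1 and 3.2 — has already been carried out in proving Theorems 3.1, 3.2, and 3.3. The one point demanding care is the bookkeeping at the low end: confirming that $v=11$ is indeed supplied by Lemma 3.6, so that the restriction $v>11$ in Theorem 3.1 produces no gap, and checking that the two omitted values $v=5$ and $v=15$ coincide exactly with the cases excluded in the statement, so that nothing admissible is left unproven.
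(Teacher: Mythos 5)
Your proposal is correct and matches the paper's own (implicit) argument: the paper derives this theorem by exactly the same case split, combining Lemma 3.6 for $v=11$, Theorem 3.1 for $v\equiv1\ ({\rm mod}\ 10)$, $v>11$, the remark adapting the super-simple $2$-$(91,5,2)$ design of \cite{pentagon2004} for $v=91$, and Theorems 3.2 and 3.3 for $v\equiv5$ and $v\equiv15\ ({\rm mod}\ 20)$ respectively. Your bookkeeping at the low end ($v=11$; the excluded values $v=5,15$; no $f$-bound required for $v=11,91$) is exactly the verification the paper's summary relies on.
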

\begin{theorem}
For all $v\equiv1,5\ ({\rm mod}\ 10)$, except $v=5,15$ and, except possibly $v=11,91$, there exists a super-simple $2$-$(v,5,1)$DD with $f\geq\frac{1}{2}$.
\end{theorem}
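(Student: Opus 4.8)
The plan is to assemble the statement from the three residue-class results already in hand, since it is essentially a consolidation of Theorems 3.1, 3.2 and 3.3 together with the small-order lemmas. First I would observe that the admissible parameters $v\equiv1,5\ ({\rm mod}\ 10)$ with $v\neq5,15$ split into three disjoint families to be treated in turn: $v\equiv1\ ({\rm mod}\ 10)$, $v\equiv5\ ({\rm mod}\ 20)$, and $v\equiv15\ ({\rm mod}\ 20)$. Checking that these exhaust all admissible $v$ is immediate, since the class $v\equiv5\ ({\rm mod}\ 10)$ partitions exactly into the residues $5$ and $15$ modulo $20$.

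I would then dispatch each family by citing the corresponding theorem. For $v\equiv1\ ({\rm mod}\ 10)$ I invoke Theorem 3.1, which gives a super-simple $2$-$(v,5,1)$DD with $f\geq\frac12$ for every such $v>11$ except possibly $v=91$, the seed orders $v=21,31,41$ coming from Lemma 3.6. For $v\equiv5\ ({\rm mod}\ 20)$ with $v>5$, Theorem 3.2 supplies the required design directly, its base cases $v=25,45,65,85,105$ furnished by Lemma 3.7 and all larger orders produced recursively through Wilson's construction (Lemmas 3.1 and 3.2). Symmetrically, for $v\equiv15\ ({\rm mod}\ 20)$ with $v>15$, Theorem 3.3 provides the designs, with base cases $v=35,55,75,95,115,135$ handled in Lemma 3.8.

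It then remains to reconcile the distinguished orders with the exception list. The values $v=5$ and $v=15$ are omitted because no super-simple $2$-$(v,5,1)$DD exists for them, matching the necessary condition recorded in the introduction. The value $v=11$ is kept as a possible exception, since Lemma 3.6 builds a super-simple $2$-$(11,5,1)$DD but establishes only $f\geq\frac{2}{11}$. Finally $v=91$ is a possible exception because the construction imported from \cite{pentagon2004} yields a super-simple $2$-$(91,5,1)$DD---and hence is enough for existence in Theorem 3.4---without bounding the size of its smallest defining set. Collecting these cases gives the statement.

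The hard part is not this final step at all: every piece of substantive content sits in the three component theorems and their underlying direct and recursive super-simple DGDD constructions. The only care required here is combinatorial bookkeeping, namely confirming that the partition into residue classes is both exhaustive and non-overlapping and that the four distinguished orders ($v=5,15$ admitting no design, $v=11,91$ admitting a design but no proven $f\geq\frac12$) are tracked so that the exception list in the statement coincides exactly with what the earlier results deliver.
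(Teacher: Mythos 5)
Your proposal is correct and takes essentially the same route as the paper, which establishes this theorem simply by combining Theorem 3.1 (for $v\equiv1\ ({\rm mod}\ 10)$, $v>11$, except possibly $91$), Theorem 3.2 (for $v\equiv5\ ({\rm mod}\ 20)$, $v>5$), and Theorem 3.3 (for $v\equiv15\ ({\rm mod}\ 20)$, $v>15$), with $v=11$ excluded because Lemma 3.6 only yields $f\geq\frac{2}{11}$ and $v=91$ excluded because the design imported from the super-simple $2$-$(91,5,2)$ construction carries no bound on $f$. Your residue-class bookkeeping and treatment of the exceptional orders coincide exactly with the paper's consolidation.
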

%

%

\end{document}